\newcommand\rmlabel{\upshape({\itshape\roman*\,\/})}
\newtheorem{theorem}{Theorem}
\theoremstyle{plain}
\newtheorem{definition}[theorem]{Definition}
\newtheorem{example}{Example}
\newtheorem{lemma}[theorem]{Lemma}
\newtheorem{proposition}[theorem]{Proposition}
\newtheorem{question}[theorem]{Question}
\numberwithin{equation}{section}
\numberwithin{theorem}{section}
\numberwithin{case}{section}
\newtheorem*{lemma*}{Lemma~\ref{lem:abs}}
\newtheorem*{Lemma*}{Lemma~\ref{lem:con}}
\newtheorem*{LemmA*}{Lemma~\ref{lem:cov}}
\newtheorem*{lemmA*}{Lemma~\ref{lem:res}}
\def\COMMENT#1{}
\let\COMMENT=\footnote
\begin{document}

\title{On powers of tight Hamilton cycles in randomly perturbed hypergraphs}

\author[Y.~Chang]{Yulin Chang}
\address{(Y. Chang) Data Science Institute, Shandong University, Jinan, 250100, China}
\email{ylchang93@163.com}

\author[J.~Han]{Jie Han}
\address{(J. Han) School of Mathematics and Statistics, Beijing Institute of Technology, Beijing, 100000, China}
\email{jie\_han@uri.edu}

\author[L.~Thoma]{Lubos Thoma}
\address{(L. Thoma) Department of Mathematics, University of Rhode Island, Kingston, RI, 02881, USA}
\email{thoma@uri.edu}

\thanks{The research of the first author was supported by the China Postdoctoral Science Foundation (2022M711926, 2022T150386), Natural Science Foundation of China (12201352) and
Natural Science Foundation of Shandong Province (ZR2022QA083). The research of the second author was supported by Simons Foundation \#630884}

\begin{abstract}
For integers $k \geq 3$ and $r\geq 2$, we show that for every $\alpha> 0$, there exists $\varepsilon > 0$ such that the union of $k$-uniform hypergraph on $n$ vertices with minimum codegree at least $\alpha n$ and a binomial random $k$-uniform hypergraph $G^{(k)}(n,p)$ with $p\geq n^{-{\binom{k+r-2}{k-1}}^{-1}-\varepsilon}$ on the same vertex set contains the $r^{th}$ power of a tight Hamilton cycle with high probability.
Moreover, a construction shows that one cannot take $\varepsilon > C\alpha$, where $C=C(k,r)$ is a constant.
Thus the bound on $p$ is optimal up to the value of $\varepsilon$ and this answers a question of Bedenknecht, Han, Kohayakawa, and Mota.
\end{abstract}

\date{\today}

\maketitle
\noindent {\bf Keywords:} randomly perturbed hypergraphs, absorbing method, powers of Hamilton cycles\\

\section{Introduction}
\label{s1}
The study of the existence of Hamilton cycles is one of the most classical problems in graph theory.
A celebrated result of Dirac~\cite{Dirac} from 1952 states that every graph on $n$ $(n\geq 3)$ vertices with minimum degree at least $n/2$ contains a Hamilton cycle.
After 20 years, Karp~\cite{Karp} showed that it is NP-complete to determine whether a graph has a Hamilton cycle.
During the past two decades there has been a focus on extending Dirac's theorem to hypergraphs.

Let $k\geq 2$, a \emph{$k$-uniform hypergraph} (for short, \emph{$k$-graph}) $H=(V,E)$ consists of a vertex set $V$ of order $n$ and an edge set $E$, where $E$ is a family of $k$-subsets of $V$, that is, $E\subseteq \binom{V}{k}$.
If $E= \binom{V}{k}$, then $H$ is a \emph{complete $k$-graph}, denoted by $K_n^{(k)}$.
For $k=2$, it is an ordinary graph.
For any $d$-subset $S\subseteq V$ with $1\leq d\leq k-1$, the \emph{degree} of $S$, denoted by $\deg_H(S)$, is the number of edges of $E$ containing $S$, that is, $\deg_H(S)=\left|\left\{e\in E\colon\, e\supseteq S\right\}\right|$.
The \emph{minimum $d$-degree} $\delta_{d}(H)$ of $H$ is the minimum of $\deg_H(S)$ over all $d $-subsets $S$ of $V$.
We often call $\delta_{k-1}(H)$ the \emph{minimum codegree} of $H$.
For two $k$-graphs $G$ and $H$, denote by $G\cup H$ (or $G\cap H$) the $k$-graph with vertex set $V(G)\cup V(H)$ (or $V(G)\cap V(H)$) and edge set $E(G)\cup E(H)$ (or $E(G)\cap E(H)$).
\subsection{$\ell$-cycles}
Given $0<\ell<k$, an \emph{$\ell$-cycle} is a $k$-graph whose vertices can be ordered cyclically such that every edge consists of $k$ consecutive vertices and every pair of consecutive edges (in the natural ordering of the edges) intersects in exactly $\ell$ vertices.
A \emph{Hamilton $\ell$-cycle} is an $\ell$-cycle which is a spanning subgraph.
In particular, a Hamilton $(k-1)$-cycle is usually called a \emph{tight Hamilton cycle} and a Hamilton $1$-cycle is usually called a \emph{loose Hamilton cycle}.

In 1999, Katona and Kierstead~\cite{Katona} first extended Dirac's theorem to $k$-graphs and conjectured that every $k$-graph $H$ on $n\geq k+1\geq 4$ vertices with $\delta_{k-1}(H)\geq \left\lfloor \frac{n-k+3}{2}\right\rfloor$ has a tight Hamilton cycle.
The above conjecture, if true, is best possible (see~\cite{Katona}).
Later R\"{o}dl, Ruci\'{n}ski, and Szemer\'{e}di~\cite{Rodl3} confirmed the conjecture for $k=3$ and sufficiently large $n$ (we assume that $n$ is sufficiently large in this paper unless stated otherwise).
The same authors~\cite{Rodl2} showed that $\delta_{k-1}(H)\geq (1/2+o(1))n$ guarantees a tight Hamilton cycle for all $k\ge 3$.
This inspired a large amount of research on determining the minimum $d$-degree conditions that force Hamilton $\ell$-cycles in $k$-graphs, for $1\leq d,\ell<k$, see~\cite{Bastos1,Bastos2,Bub,Czyg,Gleb,Hiep,Han2,Han3,Keev,Kuhn3,Kuhn1,Reih,Rodl4,Rodl1} and we recommend to the reader the surveys~\cite{Rodl5,Zhao} for a detailed discussion on this topic.

The existence of Hamilton $\ell$-cycles has also been considered in the binomial random $k$-graph $G^{(k)}(n,p)$, which contains $n$ vertices and each $k$-tuple forms an edge independently with probability $p$.
The threshold for the existence of Hamilton cycles in $G(n,p)$ is about $(\log n)/n$ proven by P\'{o}sa~\cite{Posa1} and Kor\v{s}hunov~\cite{Kors} independently.
The threshold for the existence of Hamilton $\ell$-cycles has been studied by Dudek and Frieze~\cite{Dudek2,Dudek1}, who proved that for $\ell= 1$ the threshold is $(\log n)/n^{k-1}$, and for $\ell\geq 2$ the threshold is $1/n^{k-\ell}$.
Recently, Narayanan and Schacht \cite{NS} determined the sharp threshold for $\ell\geq 2$, which resolved several questions raised by Dudek and Frieze \cite{Dudek2}.
\subsection{Powers of $\ell$-cycles}
Powers of graphs are natural objects to study.
Given $k\geq 2$ and $r\geq 1$, we say that a $k$-graph is an \emph{$r^{th}$ power of a tight cycle} (for short, a \emph{$(k,r)$-cycle}) if its vertices can be ordered cyclically so that each consecutive $k+r-1$ vertices span a copy of $K^{(k)}_{k+r-1}$ and there are no other edges than the ones forced by this condition.
This extends the notion of (tight) cycles in hypergraphs, which corresponds to the case $r = 1$.
The P\'{o}sa-Seymour conjecture~\cite{Erdos64,Sey74} asserts that every graph on $n$ vertices
with minimum degree at least $rn/(r+1)$ contains the $r^{th}$ power of a Hamilton cycle.
This conjecture can also be seen as a generalization of Dirac's theorem and was solved for large $n$ by Koml\'{o}s, S\'{a}rk\"{o}zy, and Szemer\'{e}di \cite{Komlos}.
For the existence of the $r^{th}$ power of a tight Hamilton cycle in $k$-graphs, not much is known in general.
Bedenknecht and Reiher~\cite{BR20} showed that any $3$-graph on $n$ vertices with codegree at least $(4/5 + o(1))n$ contains the square of a tight Hamilton cycle, which was extended in~\cite{PSS21} to arbitrary $k\ge 2$ and arbitrary $r\ge 2$.

Riordan~\cite{Rior} first studied powers of Hamilton cycle in $G(n,p)$ and obtained that the threshold for the existence of the $r^{th}$ power of a Hamilton cycle is $n^{-1/r}$ when $r\ge 3$.
In fact, he obtained a more general result, see~\cite{Rior} for more details.
For the case $r=2$, K\"{u}hn and Othus~\cite{Kuhn2} proved that if $p\geq n^{-1/2+\varepsilon}$, then asymptotically almost surely\footnote{We say that an event happens \emph{asymptotically almost surely}, or \emph{a.a.s.}~for short, if the probability that it happens tends to $1$ as $n$ tends to infinity.}, $G(n,p)$ contains the square of a Hamilton cycle for any constant $\varepsilon > 0$.
Recently, Nenadov and \v{S}kori\'{c}~\cite{Nena} gave an improved bound, and the threshold is finally settled by Kahn, Narayanan, and Park~\cite{KNP20}.
For $k\geq 3$ and $r\geq 2$, Parczyk and Person~\cite[Theorem 3.7]{Parc} proved that the threshold for the existence of the $r^{th}$ power of a tight Hamilton cycle in $G^{(k)}(n, p)$ is $n^{-\binom{k+r-2}{k-1}^{-1}}$.
\subsection{Randomly perturbed $k$-graphs}
In this paper we consider the notion of randomly perturbed hypergraphs, which asks how many random edges are required to add to a dense hypergraph in order to make the resulting hypergraph contain a certain structure with high probability.
In 2003, Bohman, Frieze, and Martin~\cite{Bohman} showed that adding linearly many random edges to a graph on $n$ vertices with minimum degree at least $\alpha n$ $(\alpha>0)$ will ensure the resulting graph a.a.s.~is Hamiltonian.
The unbalanced complete bipartite graph $K_{\alpha n, (1-\alpha)n}$ shows that linearly many random edges are necessary.
Other results on randomly perturbed $k$-graphs can be found in~\cite{ADRRS21,Balogh,Bott1,Dudek3,Han4,Joos,Kriv1,Kriv2,Mcdo}.
In particular, powers of Hamilton cycles have been studied by Dudek et al.~\cite{ADRRS21, Dudek3} and Nenadov and Truji\'c~\cite{NT21}, where they focused on the case where linearly many random edges are added to a given dense graph.
On the other hand, one may consider the case where a vanishing degree condition is imposed and see how much one could possibly save on the threshold probability.
This is achieved for Hamilton $\ell$-cycles in randomly perturbed $k$-graphs by Han and Zhao~\cite{Han4} (see also~\cite{Mcdo}).
For $r^{th}$ power of tight Hamilton cycles with $r\ge 2$, this was studied by Bedenknecht et al.~\cite{Beden} in randomly perturbed $k$-graphs.
They stated the following question and proved the result for $\alpha>1-\binom{k+r-2}{k-1}^{-1}$.
\begin{question}[\cite{Beden}, Question 5.1]
Let integers $k\geq 3$, $r\geq 2$ and $\alpha> 0$ be given.
Is there $\varepsilon> 0$ such that, if $H$ is a $k$-graph on $n$ vertices with $\delta_{k-1}(H)\geq \alpha n$ and $p = p(n)\geq n^{-\binom{k+r-2}{k-1}^{-1}-\varepsilon}$, then a.a.s.~$H\cup G^{(k)}(n, p)$ contains the $r^{th}$ power of a tight Hamilton cycle?
\end{question}
In this paper, we give a positive answer to this question.
\begin{theorem}[Main result]\label{main}
For $k\geq 3$, $r\geq 2$ and $\alpha>0$, there exists $\varepsilon>0$ such that the following holds.
Suppose $H$ is an $n$-vertex $k$-graph with $\delta_{k-1}(H)\geq \alpha n$ and $p=p(n)\geq n^{-{\binom{k+r-2}{k-1}}^{-1}-\varepsilon}$.
Then a.a.s.~the union $H\cup G^{(k)}(n,p)$ contains the $r^{th}$ power of a tight Hamilton cycle.
\end{theorem}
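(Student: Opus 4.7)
The plan is the absorbing method. Using sprinkling, decompose $G^{(k)}(n,p)$ into three independent copies $G_1, G_2, G_3 \sim G^{(k)}(n,p/3)$, to be used for absorbers, short connecting gadgets, and the covering of the main body, respectively.

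First, build an $r$-th power of a tight path $P_A$ in $H\cup G_1$ on at most $\mu n$ vertices with the property that any set $V'\subseteq V(H)\setminus V(P_A)$ of size at most $\mu^2 n$ can be \emph{absorbed}: $V(P_A)\cup V'$ spans an $r$-th power of a tight path with the same end $(k+r-2)$-tuples. Following the R\"{o}dl--Ruci\'{n}ski--Szemer\'{e}di absorbing framework adapted to powers of tight paths, this reduces to showing that every $v\in V(H)$ lies in polynomially many constant-size vertex absorbers in $H\cup G_1$, where the tight-path backbones of these absorbers are built greedily in $H$ using $\delta_{k-1}(H)\geq\alpha n$ and the few remaining edges upgrading them to $r$-th powers are supplied by $G_1$; the $\varepsilon$-slack in the exponent of $p$ guarantees abundance via Janson-type concentration.

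Next, choose a reservoir $R\subseteq V(H)\setminus V(P_A)$ of size $\mu n$ such that any pair of appropriate $(k+r-2)$-end-tuples can be joined by a short $r$-th power tight path through $R$ using $H\cup G_2$. Then cover all but $\mu^2 n$ of the remaining vertices by $O(1)$ vertex-disjoint $r$-th powers of tight paths using $H\cup G_3$. Finally, concatenate these long paths via $R$, close the cycle through $P_A$, and use the absorbing property of $P_A$ to swallow the leftover vertices together with any unused portion of $R$.

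The principal obstacle is the covering step. When $\alpha$ is small, $H$ alone contains no substantial $r$-th power tight path, while $G^{(k)}(n,p)$ at this density is below threshold for such structures, so neither side can supply the covering by itself. The resolution is to use $H$ to provide the tight-path backbone of the cover, built greedily from the codegree condition, and then to upgrade the backbone to an $r$-th power by verifying that the $\binom{k+r-2}{k-1}-1$ extra edges required per newly added vertex are present in $G_3$. A quick first-moment check shows that, at a fixed tight end, the number of valid extensions is of order $\alpha n\cdot p^{\binom{k+r-2}{k-1}-1}=n^{\Omega(\varepsilon)}$, so on average the greedy step succeeds; the delicate technical point is to coordinate the greedy forward-extension in $H$ with the random edges so that, vertex by vertex, the required extra edges are present and the conditional dependencies remain controllable. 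This step essentially pins down the exponent $-\binom{k+r-2}{k-1}^{-1}$ and is the heart of the proof.
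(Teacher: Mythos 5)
Your high-level skeleton (absorbing path, reservoir, covering, concatenation) is the right framework, but you have misplaced where the actual difficulty lies, and the piece you hand-wave past is exactly the paper's main contribution.

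First, the covering step is \emph{not} the heart of the proof, and your plan there is both more complicated and less controllable than what the paper does. At density $p\ge n^{-\sigma-\varepsilon}$ with $\sigma=\binom{k+r-2}{k-1}^{-1}$, the random graph \emph{alone} already contains, inside every linear-size vertex set, a copy of $P_m^{k,r}$ for a suitable constant $m=O(1/\varepsilon)$: one checks that $\Phi_{P_m^{k,r}}\ge cn$ (Proposition~3.1 in the paper, from \cite{Beden}) and then applies Lemma~3.2. One therefore greedily carves out $\Theta(n)$ vertex-disjoint constant-length $(r,k)$-paths from $G_2$ with no use of $H$ at all. Your proposal to build a single long $H$-backbone and verify, vertex by vertex, that the $\binom{k+r-2}{k-1}-1$ power edges appear in $G_3$ runs into the conditioning problem you yourself flag (``coordinate the greedy forward-extension in $H$ with the random edges'') and never resolves it; the paper's constant-length covering sidesteps this entirely.

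The genuine obstacle, which your proposal does not address, is the \emph{connecting} step. You say ``any pair of appropriate $(k+r-2)$-end-tuples can be joined by a short $r$-th power tight path through $R$ using $H\cup G_2$,'' but when $\alpha$ is small one cannot pre-extend the ends by deterministic $H$-edges to an $(r,k)$-power before applying Janson, as was done in \cite{Beden}. One must already use random edges adjacent to the rooted end-tuples. This forces the relevant parameter $\Phi_{F,W}$ for the connector gadget $F$ (Definition~3.6, Lemma~3.7) to be only about $cn^{\sigma/2}$, which is sublinear, and a single Janson application then cannot give all $\Theta(n)$ connections simultaneously: the failure probability is not small enough. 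The paper's fix is the multi-round exposure embedding (Proposition~4.2, adapted from \cite{Han}): split $G$ into $\ell+1$ independent rounds, run a greedy Janson embedding in each round so that the number of unconnected pairs drops by a factor $n^{\sigma/2}/\log n$ per round, and iterate $O(1/\sigma)$ times until everything is connected. This is precisely the ``reset the candidate pool'' trick outlined in Section~2 and is what makes the theorem go through for all $\alpha>0$; your proposal has no analogue of it and would, as written, only give the result for large $\alpha$ as in \cite{Beden}.

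A smaller point: your absorber step is in the right spirit (tight-path backbone in the link $L_v$ via supersaturation, upgrade to the power by random edges), and here the gadget $A$ (Definition~3.3) does satisfy $\Phi_{A,W}\ge cn$ (Lemma~3.5), so the absorbers are in fact the easier half. The connecting lemma is where the work is.
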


We remark that Theorem \ref{main} holds for every $k+r\geq 4$ (however, one can not hope for the $-\varepsilon$ term in the condition on $p$ for $k=2,r=1$).
In fact, the case $k=2$, $r\geq 2$ was proved in \cite{Bott2}, and the case $k\geq 3$, $r=1$ was proved in~\cite{Han4} and~\cite{Mcdo} independently.

The bound on $p$ in Theorem~\ref{main} is optimal up to the value of $\varepsilon$ in the following sense.
Indeed, given $k\ge 3$ and $r\ge 2$, let $\alpha>0$ be sufficiently small and $n$ be sufficiently large.
Consider the $n$-vertex $k$-graph $H_0$ whose vertex set $V$ is partitioned into two disjoint sets $A\cup B$ such that $|A|=\alpha n$, $|B|=(1-\alpha) n$, and whose edges are all the $k$-sets in $V$ that intersect $A$.
Clearly $\delta_{k-1}(H_0)= \alpha n$.
We are going to show that there exists $\varepsilon=\varepsilon(\alpha)$ such that if $p\le n^{-\binom{k+r-2}{k-1}^{-1}-\varepsilon}$ then a.a.s.~$H_0\cup G^{(k)}(n,p)$ does not contain the $r^{th}$ power of a tight Hamilton cycle.
Suppose a.a.s.~$H_0\cup G^{(k)}(n,p)$ contains the $r^{th}$ power of a tight Hamilton cycle, denoted by $\mathcal{C}$.
Since $|A| =\alpha n$, $\mathcal{C}$ contains at least $1/\alpha-1$ consecutive vertices in $B$.
Let $m =\left\lfloor \frac{1}{\alpha} -1-\frac{(k-1)(k+r-1)}{k}\right\rfloor\binom{k+r-2}{k-1}$.
Since $B$ is an independent set in $H_0$, this implies that $G^{(k)}(n,p)$ a.a.s.~contains the $r^{th}$ power of a tight path with $1/\alpha-1$ vertices and thus $m$ edges (the definition is given in Section~\ref{s2}).
Notice that $1/m\le (1+\beta) \binom{k+r-2}{k-1}^{-1}\alpha$ for some small $\beta>0$, since $\alpha$ is sufficiently small.
Taking $\varepsilon=(1+\beta)(k+r-1)\binom{k+r-2}{k-1}^{-1}\alpha$, we have
$\varepsilon\ge (k+r-1)/m$, and thus $p\le n^{-\binom{k+r-2}{k-1}^{-1}-\varepsilon}\le n^{-\binom{k+r-2}{k-1}^{-1}-(k+r-1)/m}$.
Let $X$ be the number of copies of such tight path in $G^{(k)}(n,p)$.
Then we have $\mathbb{E}[X]< n^{1/\alpha-1}p^m = o(1)$.
By Markov's inequality (see e.g.~\cite[inequality (1.3)]{Janson}), a.a.s.~$G^{(k)}(n,p)$ contains no $r^{th}$ power of a tight path with $1/\alpha-1$ vertices and $m$ edges, which leads to a contradiction.
In particular, the construction shows that in Theorem~\ref{main}, one cannot take $\varepsilon > (1+o(1))(k+r-1)\binom{k+r-2}{k-1}^{-1}\alpha$ (while our proof of Theorem~\ref{main} only gives a rather weak dependence which we make no attempt to optimize).

In view of the result of Parczyk and Person~\cite{Parc}, Theorem~\ref{main} shows a saving of a polynomial factor $n^{\varepsilon}$ in the threshold probability compared with the purely random model $G^{(k)}(n, p)$.
This polynomial-saving also appears in~\cite{Beden} (although a higher minimum degree condition is imposed) and~\cite{Bott2}.
In contrast, in some other results, e.g.~Bohman et al.~\cite{Bohman}, the saving is only a poly-logarithm factor.
It seems that this interesting dichotomy can be explained in the following way.
Suppose we are looking for the existence of a power of a Hamilton cycle.
In our model, the dense $k$-graph $H$ can help on embedding some vertices to the cycle, and then leaves the job of embedding power of (tight) paths of constant length.
Informally, it is reduced to finding a collection of vertex-disjoint constant-length paths whose union covers $(1-o(1))n$ vertices.
Clearly constant-length paths should be ``cheaper'' than cycles and thus we expect to observe a saving of polynomial factor on the threshold probability $p$.
However, such a saving should not be expected when $p$ is too low, e.g., when $p=\log(n)/n$ (for Hamiltonicity), getting $p'=n^{-1-o(1)}$ results in only sublinearly many random edges, which is clearly not enough by considering the above construction.

The rest of the paper is organized as follows.
In Section~\ref{s2} we introduce some notation and main lemmas for the proof of Theorem~\ref{main} and deduce the theorem based on the lemmas.
In Section~\ref{s3} we prove some preliminary results in random hypergraphs that will be used in the proof of our connecting lemma and absorbing lemma.
In Section~\ref{s4} we prove our reservoir lemma and path-cover lemma, and in Section~\ref{s5} we show our connecting lemma and absorbing lemma.
Throughout the paper, we omit floor and ceiling functions unless it is necessary.
We write $\alpha\ll \beta\ll \gamma$ to mean that we can choose positive constants $\alpha,\beta,\gamma$ from right to left.
More precisely, there are two monotone functions $f$ and $g$ such that, given $\gamma$, whenever $\beta\le f(\gamma)$ and $\alpha\le g(\beta)$, the subsequent statements hold.
Hierarchies with more constants are defined analogously.
Implicitly, we assume that all constants appearing in a hierarchy are positive.
\section{Outline of the Proof of Theorem~\ref{main}}
\label{s2}
\subsection{Notation}
In this subsection we introduce some notation used throughout the paper.
Given a $k$-graph $H=(V,E)$, we use $v_H$ and $e_H$ to denote the number of vertices and edges of $H$, respectively.
For a vertex $v\in V$, the \emph{link} $L_v$ of $v$ is the $(k-1)$-graph with vertex set $V(L_v)=V\setminus\{v\}$ and edge set $E(L_v)=\left\{S\colon\,S\cup \{v\}\in E\right\}$.
Given two vertex sets $S$ and $R$ such that $|S|=s<k$, we denote by $\deg_H(S,R)$ the number of $(k-s)$-sets $T\subseteq R$ such that $S\cup T$ is an edge of $H$, that is, $\deg_H(S,R)=\left|\left\{T\in \binom{R}{k-s}\colon\, S\cup T\in E\right\}\right|$.
Given a positive integer $n$, let $[n]=\{1,2,\ldots,n\}$.

Given $k\geq 2$ and $r\geq 1$, we say that a $k$-graph is an \emph{$r^{th}$ power of a $k$-uniform tight path} (for short, a \emph{$(k,r)$-path}) if its vertices can be ordered such that each consecutive $k+r-1$ vertices span a copy of $K^{(k)}_{k+r-1}$ and there are no other edges than the ones forced by this condition.
For $b\geq k+r-1$, we use $P_b^{k,r}$ to denote a $(k,r)$-path on $b$ vertices.
The notion of a $k$-uniform tight path corresponds to the case $r = 1$.
Moreover, the \emph{ends} of a $(k,r)$-path are its first and last $k+r-2$ vertices (with the order in the $(k,r)$-path), and the vertices of the $(k,r)$-path not belonging to its ends are called \emph{internal} vertices.
Note that the number of edges of $P_b^{k,r}$ can be counted by fixing a clique of size $k+r-1$, and adding vertices one by one following the order of the path.
So we have that
\[
\left|E(P_b^{k,r})\right|= \binom{k+r-1}{k}+\left(b-(k+r-1)\right)\binom{k+r-2}{k-1},
\]
and for $k+r-1\leq t\leq b$, any subgraph of $P_b^{k,r}$ on $t$ vertices has at most $\binom{k+r-1}{k}+\left(t-(k+r-1)\right)\binom{k+r-2}{k-1}$ edges.
For convenience, define
\begin{align}
g_{k,r}(b)&:=\binom{k+r-1}{k}+\left(b-(k+r-1)\right)\binom{k+r-2}{k-1} \label{equ1}\\
           &=\left(b-\frac{(k-1)(k+r-1)}{k}\right)\binom{k+r-2}{k-1} \label{equ2},
\end{align}
then $g_{k,r}(b)$ counts the number of edges in $P_{b}^{k,r}$ (we omit the subscripts $k$ and $r$ in $g_{k,r}(b)$ when there is no danger of confusion).

To simplify notation, throughout the rest of the paper, we write
$$h=k+r-2 \mbox{ and } \sigma=\binom{k+r-2}{k-1}^{-1}.$$
\subsection{New ideas in the proof}
The general framework for building the (power of) Hamilton cycle is the absorption method, which has been a powerful tool in finding spanning subgraphs since R\"{o}dl, Ruci\'{n}ski, and Szemer\'{e}di in~\cite{Rodl1} applied it to establish a generalization of Dirac's theorem to $k$-graphs.
Following this framework, one shall first fix a short path $P$ which can absorb \emph{any} small set of vertices to the interior of the path.
Then the job is reduced to fining a collection of small number ($o(n)$) of vertex-disjoint paths whose union covers almost the entire vertex set and then connecting these paths and $P$ to a cycle.
At the last the vertices outside the cycle will be absorbed by $P$.
We use this framework and also employ Janson's inequality to derive local structures (e.g.~for connecting two paths by a constant number of vertices) using random edges.
However, with these techniques, the authors of~\cite{Beden} were only able to show Theorem~\ref{main} under the stronger condition $\alpha>1-\binom{k+r-2}{k-1}^{-1}$.

When using Janson's inequality, the authors of~\cite{Beden} chose to keep ``root'' vertices free of random edges.
For example, to connect two given tight paths, they would like to first extend the paths with deterministic edges, and then argue that there are many choices of such extensions enforced by the minimum codegree condition.
Then among all those extensions using deterministic edges (e.g., edges of $H$), one can hook up the paths using random edges by Janson's inequality.
Since our aim is to eliminate the codegree requirement almost entirely, under the same framework of absorption (and for $r\ge 2$), we \emph{have to} use random edges even in the initial extensions.
However, this gives a significant challenge as such extensions use random edges containing ``fixed'' vertices.
In the numerical aspect, all applications of Janson's inequality in~\cite{Beden} were helpful only if $\Phi_F=\Phi_F(n,p)\ge Cn$ (where $F$ being the $k$-graph as the connector, see Definition~\ref{def2}, and the definition of $\Phi_F$ is given in Section~\ref{s3}) for some large constant $C$, and such extensions using random edges will cause $\Phi_F$ to be sublinear.

To see why $\Phi_F\ge Cn$ is helpful, consider the following process of finding a family of vertex-disjoint copies of $F$ which together covers all but $\gamma n$ vertices, for some small $\gamma$ and large $n$.
When $\Phi_F\ge Cn$, by Janson's inequality, we obtain that the probability of any vertex set of size $\gamma n$ not containing a copy of $F$ is at most $e^{-n}$, which allows us to apply the union bound to all vertex subsets, so that a.a.s.~this is true for all vertex subsets of size $\gamma n$ \emph{simultaneously}.
Owing to this property, one can find the desired copies of $F$ greedily.

To overcome this issue, we use a recent noval embedding scheme (see Proposition~\ref{p1}), which has been developed in~\cite{Han}.
For example, suppose $\Phi_F=n^{0.1}$ and there are $\Omega(n)$ pairs of paths that we need to connect.
An application of Janson's inequality does not guarantee \emph{simultaneous} connections of these paths because the probability is too low to apply a union bound to avoid clashes, i.e., connections are possible but one cannot guarantee that all connections are done by disjoint paths.
However, a natural greedy attempt shows that all but $n^{0.9}\log n$ of them can be connected.
Now suppose we can ``reset'' the candidates for connection with the $n^{0.9}\log n$ pairs of paths, the same embedding scheme will reduce the leftover pairs of paths to be $n^{0.8}\log^2 n$.
That is, we can finish the embedding after 11 rounds.
This serves as the heart of our embedding scheme to find simultaneous connections of multiple pairs of paths.
It is not hard to reset the candidate pool: we use the probabilistic method to save the codegree condition into multiple blocks, and use the \emph{multi-round exposure} approach to save the random edges in order to expose them in multiple rounds.
\subsection{Proof of Theorem~\ref{main}}
Following previous work~\cite{Hiep,Kuhn3,Rodl3,Rodl1,Rodl2}, we prove Theorem~\ref{main} by using the ``absorption technique'' pioneered by R\"{o}dl, Ruci\'{n}ski, and Szemer\'{e}di~\cite{Rodl1}.
More precisely, we find the desired power of Hamilton cycle by applying the Connecting Lemma (Lemma~\ref{lem:con}), the Absorbing Lemma (Lemma~\ref{lem:abs}), the Reservoir Lemma (Lemma~\ref{lem:res}), and the Path-cover Lemma (Lemma~\ref{lem:cov}).
In order to construct an almost spanning $(k,r)$-cycle of a $k$-graph $H$, we first find some $(k,r)$-paths and then connect them to a $(k,r)$-cycle.
Recall that the ends of a $(k,r)$-path are two ordered $h$-sets that consist of the first and last $h$ vertices, each spanning a copy of $K_{h}^{(k)}$, and the vertices of the $(k,r)$-path not belonging to its ends are its internal vertices.
For a collection of $2t$ mutually disjoint sets of $h$ vertices $A_i,B_i$ we say that a set of $(k,r)$-paths $\mathcal{T}=\{\mathcal{T}_1,\ldots,\mathcal{T}_t\}$ \emph{connects} $(A_i, B_i)_{i\in[t]}$ if all paths are vertex-disjoint and $A_i$ and $B_i$ are the ends of $\mathcal{T}_i$, for all $i\in [t]$.
The connections for a given collection of disjoint $(k,r)$-paths are given by the following lemma (Lemma~\ref{lem:con}).
In addition the lemma allows to restrict the edges used for the connection to a given ``well-connected'' subset (reservoir) $R$ of vertices, and it also allows to find $(k,r)$-paths for the connection whose number of internal vertices are under control.

We start with the reservoir lemma.
In the proof of Theorem~\ref{main}, note that the family of paths that we need to connect occupy essentially the entire vertex set.
To guarantee the connection, we put aside a small set $R$ of vertices aside at the beginning, which is called the \emph{reservoir} and has the property that every $(k-1)$-set has several neighbours in $R$.

\begin{lemma}[Reservoir Lemma]\label{lem:res}
For all $\alpha>0$ and $\gamma>0$, the following holds for sufficiently large $n$.
Let $H=(V,E)$ be an $n$-vertex $k$-graph with $\delta_{k-1}(H)\ge \alpha n$.
Then there is a set $R\subseteq V$ of size $\gamma^2 n/2\le |R|\le 2\gamma^2 n$ such that for all $(k-1)$-sets $S\in\binom{V}{k-1}$ we have $\deg_H(S,R)\ge \alpha |R|/2$.
\end{lemma}

The following is our connecting lemma, which allows us to connect a collection of $(k,r)$-paths to a $(k,r)$-cycle.

\begin{lemma}[Connecting Lemma]\label{lem:con}
Let $k\ge 3$, $r\ge 2$, and $b\geq (k+r)^2\binom{k+r-2}{k-1}$ be an even integer.
Suppose $1/n\ll \varepsilon,\gamma \ll \alpha,1/k,1/r,1/b$.
Let $H=(V,E)$ be an $n$-vertex $k$-graph and $R\subseteq V$, and suppose $p=p(n)\geq n^{-\sigma-\varepsilon}$.
For every collection of $2t$ with $t\le \gamma n$ mutually disjoint ordered $h$-sets $A_i$ and $B_i$ in $V$, each spanning a copy of $K_h^{(k)}$ in $H$, the following holds for $V'=\bigcup_{i\in [t]}(A_i\cup B_i)$.

If $\deg_{H}(S,R)\geq \alpha n$ for all $S\in \binom{V}{k-1}$, then a.a.s.~$H\cup G^{(k)}(n,p)$ contains a family of $(k,r)$-paths $\mathcal{T}=\{\mathcal{T}_1,\ldots, \mathcal{T}_t\}$, each with exactly $b$ internal vertices in $R$, connecting $(A_i, B_i)_{i\in [t]}$, which contains vertices from $V'\cup R$ only.
\end{lemma}

The next result is our absorbing lemma.
Such a lemma usually outputs an absorbing path, which can ``absorb'' any small set of vertices.
We prove the following weaker version, in which our absorbing path can only absorb subsets of a given vertex set.
We shall see that this weaker version is easier to prove and in fact suffices for our purpose.

\begin{lemma}[Absorbing Lemma]\label{lem:abs}
Let $k\ge 3$ and $r\ge 2$, and suppose $1/n\ll \varepsilon,\gamma\ll\alpha,1/k,1/r$.
Let $H=(V,E)$ be an $n$-vertex $k$-graph and $X\subseteq V$ with $|X|\le \gamma n$, and suppose $p=p(n)\geq n^{-\sigma-\varepsilon}$.
Then a.a.s.~$H'=H\cup G^{(k)}(n,p)$ has the following property.

If $\deg_{H}(S,V\setminus X)\geq \alpha n$ for all $S\in \binom{V}{k-1}$, then the $k$-graph $H'[V\setminus X]$ contains a $(k,r)$-path $P_{abs}$ with $|V(P_{abs})|\le\sqrt{\gamma} n$ such that for every $U\subseteq X$ there exists a $(k,r)$-path $Q$ in $H'$ with $V(Q)=V(P_{abs})\cup U$ having the same ends as $P_{abs}$.
\end{lemma}

The last lemma below allows us to find ``not too many'' $(k,r)$-paths to cover all but a small fraction of vertices.

\begin{lemma}[Path-cover Lemma]\label{lem:cov}
Let $k\ge 3$ and $r\ge 2$, and suppose $1/n\ll \varepsilon\ll1/m\ll \gamma,1/k,1/r$.
Let $G^{(k)}(n,p)$ be the binomial random $k$-graph on an $n$-vertex set $V$.
If $p=p(n)\geq n^{-\sigma-\varepsilon}$, then a.a.s.~$G^{(k)}(n,p)$ has the following property.

For every subset $Y\subseteq V$ there exists a collection of at most $\gamma n/2$ vertex-disjoint $(k,r)$-paths, each with exactly $m$ vertices, covering all but at most $\gamma n$ vertices of $V\setminus Y$.
\end{lemma}

Now we can prove Theorem~\ref{main} by combining the four lemmas.

\begin{proof}[Proof of Theorem~\ref{main}]
Let $k\ge 3$, $r\ge 2$ and $\alpha> 0$ be given and let $b=4(k+r)^2/\sigma$.
Suppose $1/n\ll\varepsilon\ll \gamma\ll \alpha,1/k,1/r$.
Let $H=(V,E)$ be an $n$-vertex $k$-graph with $\delta_{k-1}(H)\geq \alpha n$ and suppose $p=p(n)\geq n^{-\sigma-\varepsilon}$.
We will expose $G=G^{(k)}(n,p)$ in four rounds: $G=G_1\cup G_2\cup G_3\cup G_4$ with $G_1, G_2, G_3$ and $G_4$ as independent copies of $G^{(k)}(n,p')$, where $(1-p')^4=1-p$.
Note that $(1-p')^4>1-4p'$, so $p'> p/4\geq n^{-\sigma-2\varepsilon}$.

By Lemma~\ref{lem:res} there is a reservoir set $R\subseteq V$ of size $\gamma^2 n/2\le|R|\leq 2\gamma^2 n$ such that $\deg_{H}(S,R)\geq\alpha |R|/2$ for all $(k-1)$-subsets $S\in \binom{V}{k-1}$.

Since $\delta_{k-1}(H)\ge \alpha n$ and $\gamma\ll \alpha$, we have $\deg_{H}(S,V\setminus R)\geq \alpha n-2\gamma^2n\ge \alpha n/2$ for $S\in \binom{V}{k-1}$.
By Lemma~\ref{lem:abs} with $(R,\alpha/2,2\gamma^2,2\varepsilon)$ playing the role of $(X,\alpha,\gamma,\varepsilon)$, a.a.s.~the $k$-graph $H\cup G_1$ contains an absorbing $(k,r)$-path $P_{abs}$ on $V\setminus R$ and let $A_0$ and $B_0$ be the ends of $P_{abs}$.
Then $|V(P_{abs})|\le \sqrt{2\gamma^2} n\le 2\gamma n$ and $P_{abs}$ has the following absorption property:
for every subset $U\subseteq R$, there is a $(k,r)$-path $Q$ in $H\cup G_1$ such that $V(Q)=V(P_{abs})\cup U$ and $Q$ has the ends $A_0$ and $B_0$.

Let $V_0=V\setminus \left(V(P_{abs})\cup R\right)$.
Now we apply Lemma~\ref{lem:cov} to $G_2$ with $(V(P_{abs})\cup R,\gamma^{12},2\varepsilon)$ playing the role of $(Y,\gamma,\varepsilon)$, and obtain a collection of disjoint $(k,r)$-paths $P_1,\ldots,P_t$ with $t\le \gamma^{12} n/2$ that cover the set $V_0$ except for a small subset $U\subseteq V_0$ with $|U|\le\gamma^{12} n$.

Let $H'=H\cup G_3$ and $V_1=R\cup U$, and let $n_1=|V_1|$.
Next we find a $(k,r)$-path in $H'[V_1]$ containing all vertices of $U$.
Note that $\gamma^2 n/2\le |R|\le n_1=|R|+|U|\le 3\gamma^2n$, and then $|U|\le \gamma^{12} n\le \gamma^9 n_1$ and $n_1\le 6|R|$.
Thus $\deg_{H}(S,R)\geq\alpha |R|/2\ge \alpha n_1/12$ for all $S\in \binom{V_1}{k-1}$.
Now we apply Lemma ~\ref{lem:abs} to $H'[V_1]$ with $(U,n_1,\alpha/12,\gamma^{12},2\varepsilon)$ playing the role of $(X,n,\alpha,\gamma,\varepsilon)$, and conclude that a.a.s.~the $k$-graph $H'[V_1]$ contains an absorbing $(k,r)$-path $P'_{abs}$ on $R$, whose ends are denoted by $A'_0$ and $B'_0$.
Then $|V(P'_{abs})|\le \gamma^3 n_1\le 3\gamma^5 n$ and there exists a $(k,r)$-path $Q'$ in $H'[V_1]$ with $V(Q')=V(P'_{abs})\cup U$ having the same ends as $P'_{abs}$.

For all $i\in [t]$, we denote the ends of $P_i$ by $A_i$ and $B_i$.
Let $A_{t+1}=A'_0$, $B_{t+1}=B'_0$ and $A_{t+2}=A_0$.
Note that for all $S\in \binom{V}{k-1}$ we have $\deg_{H}(S,R)\geq\alpha |R|/2\ge \alpha\gamma^2 n/4$.
By using Lemma~\ref{lem:con} with $(\alpha\gamma^2/4,t+2,\gamma^{11},2\varepsilon)$ playing the role of $(\alpha,t,\gamma,\varepsilon)$, a.a.s.~the $k$-graph $H\cup G_4$ contains a collection of $(k,r)$-paths, each with exactly $b$ internal vertices in $R$, connecting the family $(B_i,A_{i+1})_{0\le i\le t+1}$.
This implies that we connect the $(k,r)$-paths $P_{abs},P_1,\ldots,P_t,Q'$ to a $(k,r)$-cycle $\mathcal{C}$.

Let $U'=V\setminus V(\mathcal{C})$ be the set of vertices not contained in $\mathcal{C}$, i.e.~the leftover vertices in the reservoir $R$.
Then we can utilize the absorbing property of $P_{abs}$ to obtain a $(k,r)$-path $Q''$ with $V(Q'') = V(P_{abs})\cup U'$ having the same ends as $P_{abs}$.
Therefore, we can replace $P_{abs}$ by $Q''$ in $\mathcal{C}$ and obtain the desired $r^{th}$ power of a tight Hamilton cycle in $H\cup G^{(k)}(n,p)$.
\end{proof}
\section{Preliminaries}
\label{s3}
In this section we discuss some relevant results related to binomial random $k$-graphs $G^{(k)}(n, p)$.
Following~\cite{Han,Janson}, given a $k$-graph $F$, define $\Phi_F=\Phi_F(n,p):=\min\left\{n^{v_H}p^{e_H}\colon\,H\subseteq F,\, e_H>0\right\}$.
Here we will be interested in the appearance of $k$-graphs in $G^{(k)}(n, p)$ where we require some subset of vertices to be already fixed in place.
Therefore, for a $k$-graph $F$, and a subset of \emph{independent} vertices $W \subseteq V (F)$, i.e.~$E(F[W])=\emptyset$, we define
\[
\Phi_{F,W}=\Phi_{F,W}(n,p):=\min\left\{n^{v_H-v_{H[W]}}p^{e_H}\colon\,H\subseteq F,\, e_H>0\right\}.
\]
It is easy to see that $\Phi_F=\Phi_{F,\emptyset}$ and $\Phi_{F\setminus W}\geq \Phi_{F,W}$ for any $F$ and independent set $W\subseteq V (F)$.

We will use the following result proved by Bedenknecht et al.~in~\cite{Beden}.
Recall that $h=k+r-2$ and $\sigma=\binom{k+r-2}{k-1}^{-1}$.

\begin{proposition}[\cite{Beden}, Proposition 2.3]\label{p2}
Let $k\geq 3$, $r\geq 2$, $b\geq k+r-1$ and $C>0$.
Suppose $0<\varepsilon<\min\left\{(2g(b)\right)^{-1},\sigma/3\}$ and $1/n\ll 1/C,1/k,1/r,1/b$.
If $p=p(n)\geq n^{-\sigma-\varepsilon}$, then $\Phi_{P_b^{k,r}}\geq Cn$.
\end{proposition}

Now let us bound $\Phi_{F\setminus W}$ and $\Phi_{F,W}$ for some given $k$-graphs $F$ with an independent subset $W$ of $V(F)$ fixed in place, which will be used to prove our connecting lemma and absorbing lemma.

\begin{definition}\label{def1}
Let $A$ be a labelled $k$-graph whose ordered vertex set is denoted by
$$V(A)=(v_1,\ldots,v_{h},v,v_{h+1},\ldots,v_{2h}),$$
and whose edge set can be partitioned into two disjoint sets $E(A)=E_1\cup E_2$ such that
  \begin{enumerate}[label=\rmlabel]
  \item \label{BHMi} $E_1$ induces a $(k,r)$-path $P_{2h}^{k,r}$ under the order $(v_1,\ldots,v_{h},v_{h+1},\ldots,v_{2h})$;
  \item \label{BHMii} $E_2$ consists of the edges containing $v$ with edges $\left\{vv_j\ldots v_{j+(k-2)}\colon\, j\in[k+2r-2]\right\}$ removed.
\end{enumerate}
\end{definition}

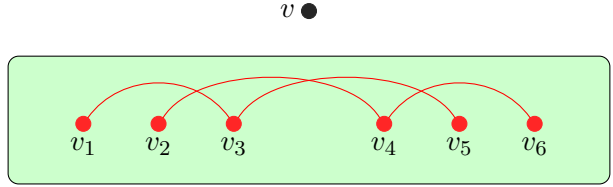
\begin{figure}[h]\label{fig1}
\begin{center}
\begin{tikzpicture}
[inner sep=2pt,
   vertex/.style={circle, draw=red!85, fill=red!85},
   vertex1/.style={circle, draw=black!85, fill=black!85},
   ]
\begin{pgfonlayer}{background}    % select the background layer
\draw[rounded corners, fill=green!20] (-4,-0.8) rectangle (4, 0.9);
\end{pgfonlayer}
\node at (-3,0) [vertex, label=below: $v_1$] {};
\node at (-2,0) [vertex, label=below: $v_2$] {};
\node at (-1,0) [vertex, label=below: $v_3$] {};
\node at (1,0) [vertex, label=below: $v_4$] {};
\node at (2,0) [vertex, label=below: $v_5$] {};
\node at (3,0) [vertex, label=below: $v_6$] {};
\node at (0,1.5) [vertex1, label=left: $v$] {};
\draw[color=red]  (-2.94, 0.08) .. controls (-2.5, 0.7) and (-1.5, 0.7) ..  (-1.06, 0.08); % v_1-v_3
\draw[color=red]  (-1.94, 0.08) .. controls (-1.5, 0.8) and (0.5, 0.8) ..  (0.94, 0.08); % v_2-v_4
\draw[color=red]  (-0.94, 0.08) .. controls (-0.5, 0.8) and (1.5, 0.8) ..  (1.94, 0.08); % v_3-v_5
\draw[color=red]  (1.06, 0.08) .. controls (1.5, 0.7) and (2.5, 0.7) ..  (2.94, 0.08); % v_4-v_6
\end{tikzpicture}
\caption{\small An illustration of Definition~\ref{def1} for the case $k=3$ and $r=2$. Red edges represent the edges in the link of $v$ in $A$, and the edge set $E_2=\{vv_1v_3,vv_2v_4,vv_3v_5,vv_4v_6\}$, and the green part represent the edge set $E_1$.}
\end{center}
\end{figure}

\begin{example}\label{ex1}
For $k=3$ and $r=2$, the ordered vertex set $V(A)$ and edge set $E(A)$ defined in Definition~$\ref{def1}$ are as follows: $V(A)=(v_1,v_2,v_3,v,v_4,v_5,v_6)$ and $E(A)=\{v_iv_{i+1}v_{i+2}\colon\,i\in[4]\}\cup\{v_iv_{i+j}v_{i+3}\colon\,i\in[3],j=1,2\}\cup\{vv_iv_{i+2}\colon\,i\in[4]\}$.
\end{example}

In the following lemma we bound $\Phi_A$ with a root vertex $v$ from below.

\begin{lemma}\label{l1}
Let $k\geq 3$, $r\geq 2$ and $C>0$.
Suppose $0<\varepsilon\leq\frac{1}{4g(2k+2r-3)}\min\{\frac{2}{k(k+1)},(k+2)\sigma\}$ and $1/n\ll 1/C,1/k,1/r$.
Let $A$ be the labelled $k$-graph as in Definition~$\ref{def1}$, and set $W=\{v\}$.
If $p=p(n)\geq n^{-\sigma-\varepsilon}$, then $\Phi_{A\setminus W}\geq Cn$ and $\Phi_{A,W}\geq Cn$.
\end{lemma}

\begin{proof}
Let $H$ be a subgraph of $A$ with $v_H$ vertices and $e_H$ edges, where $e_H>0$.
By the definitions of $\Phi_{A\setminus W}$ and $\Phi_{A,W}$, our goal is to prove that $n^{v_H}p^{e_H}\geq Cn$ if $v\notin V(H)$ and $n^{v_H-1}p^{e_H}\geq Cn$ if $v\in V(H)$.
It suffices to assume that $H$ is an induced subgraph of $A$.

Assume that $v_H=t+1$, then we have $k-1\leq t\leq 2h$.
If $v\notin V(H)$, then $H$ is a subgraph of $P_{2h}^{k,r}$.
It is easy to check that the conditions needed for Proposition~\ref{p2} with $b=2h$ are satisfied, so we have $n^{v_H}p^{e_H}\geq Cn$.

Denote by $e_A^v$ the number of edges containing $v$ in $A$.
Note that the link $L_v$ of $v$ in $A$ is a $(k-1,r)$-path under the order $(v_1,\ldots,v_{2h})$ with the edges $\{v_j\ldots v_{j+(k-2)}\colon\, j\in[k+2r-2]\}$ removed.
Thus, from (\ref{equ1}), we obtain
\begin{align}
e_A^v&=g_{k-1,r}(2h)-(k+2r-2)\nonumber \\
  &=\binom{k+r-2}{k-1}+(k+r-2)\binom{k+r-3}{k-2}-(k+2r-2) =  k\binom{k+r-2}{k-1}-(k+2r-2). \nonumber
\end{align}

Next we consider the case $v\in V(H)$.
We have the following two cases depending on the value of $t$.
Note that it suffices to prove $t-\sigma e_H\geq 1+2e_H \varepsilon$.
Indeed, then we have
\[
n^{t}p^{e_H}\geq n^{t}\left(n^{-\sigma-\varepsilon}\right)^{e_H}\geq n^{1+2e_H\varepsilon-e_H\varepsilon} \geq Cn.
\]

\begin{enumerate}[itemindent=1em,topsep=6pt,label={\textbf{Case 1.}}]
\item $h+1 \leq t\leq 2h$.
\end{enumerate}

In this case, the edges of $H$ can be divided into two classes: the edges containing $v$ (there are at most $e_A^v$ such edges), and the edges not containing $v$, which form a $t$-vertex subgraph of $P_{2h}^{k,r}$.
So we have that $e_{H\setminus\{v\}}\leq g(t)$.
Therefore, from (\ref{equ2}), we have
\begin{align}
e_H&\leq g(t)+e_A^v=\left(t-\frac{(k+r-1)(k-1)}{k}\right)\binom{k+r-2}{k-1}+k\binom{k+r-2}{k-1}-(k+2r-2) \nonumber \\
   &=\left(t+1-\frac{(k-1)(r-1)}{k}\right)\binom{k+r-2}{k-1}-(k+2r-2) \nonumber \\
   &\leq \left(t+1-\frac{(k-1)(r-1)}{k}\right)\binom{k+r-2}{k-1}-(k+2), \nonumber
\end{align}
and then
$t-\sigma e_H\geq \frac{(k-1)(r-1)}{k}-1+(k+2)\sigma$.
Let $f=\frac{(k-1)(r-1)}{k}-1+(k+2)\sigma$, and we claim that $f\geq 1+2e_H\varepsilon$.
In fact, $f\geq 1+(k+2)\sigma$ when $r\geq 4$ and $f\geq 1+\frac{2}{k(k+1)}$ when $r=2, 3$.
It suffices to show that $\min\left\{(k+2)\sigma,\frac{2}{k(k+1)}\right\}\geq 2e_H\varepsilon$.
By the choice of $\varepsilon$, we have
\[
\min\left\{(k+2)\sigma,\frac{2}{k(k+1)}\right\}\geq 4g(t+1)\varepsilon\geq 2(g(t)+g(t+1))\varepsilon\geq 2e_H\varepsilon.
\]

\begin{enumerate}[itemindent=1em,topsep=6pt,label={\textbf{Case 2.}}]
\item $k-1\leq t\leq h$.
\end{enumerate}

In this case we have $e_H\leq \binom{t+1}{k}$.
Note that $\binom{t+1}{k}=\binom{k}{k}+\binom{k}{k-1}+\cdots+\binom{t}{k-1}\leq 1+(t-k+1)\binom{t}{k-1}\leq 1+(t-k+1)\binom{k+r-2}{k-1}$.
Therefore, we have
\begin{align}
t-\sigma e_H&\geq t-\sigma-(t-k+1)=k-1-\sigma \geq 2-1/k\geq 5/3\geq 1+2e_H \varepsilon  \nonumber
\end{align}
by the choice of $\varepsilon$.

The proof is completed.
\end{proof}

The following definition is in preparation for connecting many $(k,r)$-paths to a $(k,r)$-cycle.
Specifically, we will connect their ends by using a constant number of vertices.
Let $F$ be a $k$-graph with an ordered vertex set $V(F)$.
Suppose $H$ is a subgraph of $F$, we say that $V(H)$ forms an \emph{interval} if the vertices of $H$ are consecutive following the order of $V(F)$.

\begin{definition}\label{def2}
Let $b\in 2\mathbb{N}$ and $B$ be a labelled $k$-graph whose ordered vertex set is denoted by
\[
V(B)=(w_1,\ldots,w_{h},v_1,\ldots,v_b,u_{h},\ldots,u_1),
\]
and whose edge set $E(B)$ consists of the edges of the $(k,r)$-path on $V(B)$ with the two $k$-uniform tight paths under the order $(w_1,\ldots,w_{h},v_1,\ldots,v_{b/2})$ and the order $(v_{b/2+1},\ldots,v_b,u_{h},\ldots,u_1)$, and all edges induced by $(w_1,\ldots,w_{h})$ and $(u_{h},\ldots,u_1)$ removed.
\end{definition}

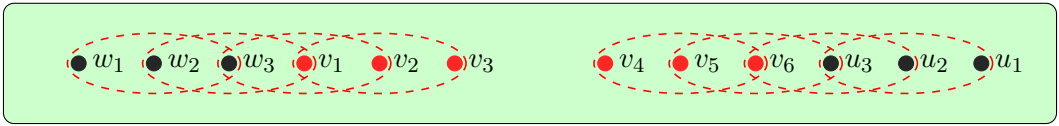
\begin{figure}[h]\label{fig2}
\begin{center}
\begin{tikzpicture}
[inner sep=2pt,
   vertex/.style={circle, draw=red!85, fill=red!85},
   vertex1/.style={circle, draw=black!85, fill=black!85},
   ]
\begin{pgfonlayer}{background}    % select the background layer
\draw[rounded corners, fill=green!20] (-7,-0.8) rectangle (7, 0.8);
\end{pgfonlayer}
\node at (-6,0) [vertex1, label=right: $w_1$] {};
\node at (-5,0) [vertex1, label=right: $w_2$] {};
\node at (-4,0) [vertex1, label=right: $w_3$] {};
\node at (-3,0) [vertex, label=right: $v_1$] {};
\node at (-2,0) [vertex, label=right: $v_2$] {};
\node at (-1,0) [vertex, label=right: $v_3$] {};
\node at (1,0) [vertex, label=right: $v_4$] {};
\node at (2,0) [vertex, label=right: $v_5$] {};
\node at (3,0) [vertex, label=right: $v_6$] {};
\node at (4,0) [vertex1, label=right: $u_3$] {};
\node at (5,0) [vertex1, label=right: $u_2$] {};
\node at (6,0) [vertex1, label=right: $u_1$] {};
\tikzstyle{style 1}=[semithick] %线条粗细 thin, semithick, thick
\draw[color=red, style 1, dashed] (-5,0) ellipse (1.15 and 0.4); %w_1-w_2-w_3
\draw[color=red, style 1, dashed] (-4,0) ellipse (1.15 and 0.4); %w_2-w_3-v_1
\draw[color=red, style 1, dashed] (-3,0) ellipse (1.15 and 0.4); %w_3-v_1-v_2
\draw[color=red, style 1, dashed] (-2,0) ellipse (1.15 and 0.4); %v_1-v_2-v_3
\draw[color=red, style 1, dashed] (2,0) ellipse (1.15 and 0.4); %v_4-v_5-v_6
\draw[color=red, style 1, dashed] (3,0) ellipse (1.15 and 0.4); %v_5-v_6-u_3
\draw[color=red, style 1, dashed] (4,0) ellipse (1.15 and 0.4); %v_6-u_3-u_2
\draw[color=red, style 1, dashed] (5,0) ellipse (1.15 and 0.4); %u_3-u_2-u_1
\end{tikzpicture}
\caption{\small An illustration of Definition~\ref{def2} for the case $k=3$, $r=2$ and $b=6$. Dashed triples represent the edges removed from the $(k,r)$-path on $V(B)$ which is indicated by the green part.}
\end{center}
\end{figure}

\begin{example}\label{ex2}
For $k=3$ and $r=2$, the ordered vertex set $V(B)$ and edge set $E(B)$ defined in Definition~$\ref{def2}$ are as follows:
$V(B)=(w_1,w_2,w_3,v_1,\ldots,v_b,u_3,u_2,u_1)$.
To write the edge set clearly, let $v_{i-3}=w_i$ and $v_{b+4-j}=u_j$ for $i,j\in[3]$.
Then $E(B)=\{v_iv_{i+j}v_{i+3}\colon\,-2\leq i\leq b,\,j=1,2\}\cup\{v_{b/2-1}v_{b/2}v_{b/2+1},v_{b/2}v_{b/2+1}v_{b/2+2}\}$.
\end{example}

The next lemma gives a bound on $\Phi_B$ with a set of root vertices.
Note that in the following $\Phi_{B,W}$ might be sublinear in $n$, and thus we will need the novel embedding scheme (Proposition~\ref{p1}) in its proof.

\begin{lemma}\label{l2}
Let $k\ge 3$, $r\ge 2$ and $b\geq (k+r)^2\binom{k+r-2}{k-1}$ be an even integer, and let $C>0$.
Suppose $0<\varepsilon\leq\left(3b\binom{k+r-2}{k-1}^2\right)^{-1}$ and $1/n\ll 1/C,1/k,1/r,1/b$.
Let $B$ be the labelled $k$-graph as in Definition~$\ref{def2}$, and set $W=\{w_1,\ldots,w_h,u_h,\ldots,u_1\}$.
If $p=p(n)\geq n^{-\sigma-\varepsilon}$, then $\Phi_{B\setminus W}\geq Cn$ and $\Phi_{B,W}\geq Cn^{\sigma/2}$.
\end{lemma}

\begin{proof}
Let $H$ be a subgraph of $B$ with $v_H$ vertices and $e_H$ edges, where $e_H>0$.
Our goal is to prove that $n^{v_H}p^{e_H}\geq Cn$ if $V(H)\cap W=\emptyset$ and $n^{v_H-|V(H)\cap W|}p^{e_H}\geq Cn^{\sigma/2}$ if $V(H)\cap W\neq\emptyset$.
It suffices to assume that $H$ is an induced subgraph of $B$.

Assume that $|W\cap V(H)|=i$ and $v_H=t+i$, then we have $1\leq t\leq b$ and $0\leq i\leq 2h$.
If $W\cap V(H)=\emptyset$, then $i=0$ and $H$ is a subgraph of $P_{b}^{k,r}$.
It is easy to check that the conditions needed for Proposition~\ref{p2} are satisfied, so we have $n^{v_H}p^{e_H}\geq Cn$.

Next we consider the case $W\cap V(H)\neq\emptyset$.
Let $W_1=\{w_1,\ldots,w_h\}$, $W_2=\{u_1,\ldots,u_h\}$ and $Q=\{v_{b/2+1},\ldots, v_{b/2+(k-1)}\}$.
It suffices to show that $e_H\leq t\binom{k+r-2}{k-1}-1$, as this implies
\[
n^tp^{e_H}\geq n^{t}\left(n^{-\sigma-\varepsilon}\right)^{e_H}\geq n^{\sigma-e_H\varepsilon}\geq Cn^{\sigma/2}
\]
by the choice of $\varepsilon$.
We consider the following three cases.

\begin{enumerate}[itemindent=1em,topsep=6pt,label={\textbf{Case 1.}}]
\item $V(H)\cap W_1\neq \emptyset$ and $V(H)\cap W_2= \emptyset$.
\end{enumerate}

Since $e_H>0$ and $W_1$ is an independent set, we have $t\geq 1$.
For every $v\in V(H)\setminus W_1$, denote by $e_H^v$ the number of edges in $H$ containing $v$ as the last vertex following the order of $V(B)$.
Note that $e_H= \sum_{v\in V(H)\setminus W_1}e_H^v$.

If $v$ and the $k+r-2$ vertices before it form an interval, then $e_H^v\leq \binom{k+r-2}{k-1}$ for $v\in Q$ and $e_H^v\leq \binom{k+r-2}{k-1}-1$ for $v\in V(H)\setminus(W_1\cup Q)$; otherwise, $e_H^v\leq \binom{k+r-3}{k-1}\leq \binom{k+r-2}{k-1}-1$.

Now we claim that $e_H\leq t\binom{k+r-2}{k-1}-1$, and it suffices to show that there exists a vertex $v\in V(H)\setminus W_1$ such that $e_H^v\leq \binom{k+r-2}{k-1}-1$.
Note that otherwise we have $V(H)\subseteq W_1 \cup Q$.
In this case we have $e_H^{u}\leq \binom{k+r-2}{k-1}-1$, where $u$ is the first vertex of $H$ in $Q$.

\begin{enumerate}[itemindent=1em,topsep=6pt,label={\textbf{Case 2.}}]
\item $V(H)\cap W_1= \emptyset$ and $V(H)\cap W_2\neq \emptyset$.
\end{enumerate}

This case can be treated similarly by the symmetry of $W_1$ and $W_2$.

\begin{enumerate}[itemindent=1em,topsep=6pt,label={\textbf{Case 3.}}]
\item $V(H)\cap W_1\neq \emptyset$ and $V(H)\cap W_2\neq \emptyset$.
\end{enumerate}

Suppose there are at least $k+r-2$ consecutive vertices missing in $H$, then we split $H$ into $H_1$ and $H_2$, where $H_1$ and $H_2$ are the induced subgraphs by the left side and the right side of the interval respectively.
Note that no edge intersects both $V(H_1)$ and $V(H_2)$.
Let $i_j=|V(H_j)\cap W_j|$ for $j=1,2$.
Clearly, we have $i=i_1+i_2$ and $e(H)=e(H_1)+e(H_2)$.
By Case 1 and Case 2, we have $n^{v_{H_j}-i_j}p^{e_{H_j}}\geq Cn^{\sigma/2}$ for $j=1,2$.
So we have
\[
n^tp^{e_H}=n^{v_{H_1}-i_1}p^{e_{H_1}}n^{v_{H_2}-i_2}p^{e_{H_2}}\geq Cn^{\sigma/2}.
\]
Therefore, we may assume that the number of missing vertices in $V(H)\setminus W$ between any two adjacent intervals is at most $k+r-3$.
Thus we have $t\geq b/(k+r-2)-1$.
Now we split the $V(H)$ into $V(H_1)$ and $V(H_2)$ from the vertex $v_{b/2+k}$, and it makes no difference whether the vertex $v_{b/2+k}$ belongs to $V(H_1)$ or $V(H_2)$.
Note that $V(H_2)\cap Q=\emptyset$.
Let $e_{1,2}=\{e\in E(H)\colon\, e\cap V(H_1)\neq \emptyset,\, e\cap V(H_2)\neq \emptyset\}$.
Then $e_H=e_{H_1}+e_{H_2}+e_{1,2}$ and $e_{1,2}\leq (k+r-2)\binom{k+r-2}{k-1}$ (consider adding vertices in $H_2$ one by one).

For every $v\in V(H_1)\setminus W_1$, denote by $e_H^v$ the number of edges in $H_1$ containing $v$ as the last vertex following the order of $V(B)$.
For every $v\in V(H_2)\setminus W_2$, denote by $e_H^v$ the number of edges in $H_2$ containing $v$ as the first vertex following the order of $V(B)$.
Note that $e_{H_1}= \sum_{v\in V(H_1)\setminus W_1}e_H^v$ and $e_{H_2}=\sum_{v\in V(H_2)\setminus W_2}e_H^v$.

Recall that $V(H_2)\cap Q=\emptyset$.
For $v\in V(H_1)\setminus W_1$, if $v$ and the $k+r-2$ vertices before it form an interval, then $e_H^v\leq \binom{k+r-2}{k-1}$ for $v\in Q$ and $e_H^v\leq \binom{k+r-2}{k-1}-1$ for $v\notin Q$; otherwise, $e_H^v\leq \binom{k+r-3}{k-1}\leq \binom{k+r-2}{k-1}-1$.
For $v\in V(H_2)\setminus W_2$, if $v$ and the $k+r-2$ vertices after it form an interval, then $e_H^v\leq \binom{k+r-2}{k-1}-1$; otherwise, $e_H^v\leq \binom{k+r-3}{k-1}\leq \binom{k+r-2}{k-1}-1$.
Let $v_i=|V(H_i)\setminus W_i|$ for $i=1,2$.
Note that $v_1+v_2=t$.
Then we have
\begin{align}
e_{H_1}+e_{H_2} &\leq \left(k-1\right)\binom{k+r-2}{k-1}+\left(v_1-k+1\right)\left(\binom{k+r-2}{k-1}-1\right)+v_2\left(\binom{k+r-2}{k-1}-1\right) \nonumber \\
 & = t\binom{k+r-2}{k-1}-\left(t-k+1\right), \nonumber
\end{align}
and then $e_H=e_{H_1}+e_{H_2}+e_{1,2}\leq \left(t+k+r-2\right)\binom{k+r-2}{k-1}-\left(t-k+1\right)$.

Recall that $t\geq b/(k+r-2)-1$ and $b\geq (k+r)^2\binom{k+r-2}{k-1}$, which implies $t-k+1\geq b/(k+r-2)-k>(k+r)\binom{k+r-2}{k-1}-k$.
So we have
\begin{align}
e_H &\leq (t+k+r-2)\binom{k+r-2}{k-1}-(t-k+1) \nonumber \\
 & \le (t+k+r-2)\binom{k+r-2}{k-1}-(k+r)\binom{k+r-2}{k-1}+k \nonumber \\
  &=t\binom{k+r-2}{k-1}+k-2\binom{k+r-2}{k-1} \leq t\binom{k+r-2}{k-1}-1. \nonumber
\end{align}
The proof is completed.
\end{proof}
\section{Proof of the Reservoir Lemma and the Path-cover Lemma}
\label{s4}
In this section, we prove our reservoir lemma (Lemma~\ref{lem:res}) and path-cover lemma (Lemma~\ref{lem:cov}).
First we focus on the reservoir lemma.
The existence of such a reservoir set is established by a standard probabilistic argument.
Below we restate and prove Lemma~\ref{lem:res}.

\begin{lemmA*}
For all $\alpha>0$ and $\gamma>0$, the following holds for sufficiently large $n$.
Let $H=(V,E)$ be an $n$-vertex $k$-graph with $\delta_{k-1}(H)\ge \alpha n$.
Then there is a set $R\subseteq V$ of size $\gamma^2 n/2\le |R|\le 2\gamma^2 n$ such that for all $(k-1)$-sets $S\in\binom{V}{k-1}$ we have $\deg_H(S,R)\ge \alpha |R|/2$.
\end{lemmA*}

\begin{proof}
Consider a random subset $R\subseteq V$ with elements chosen independently with probability $\gamma^2$.
It is easy to see that a.a.s.~$R$ satisfies the following two properties:
  \begin{enumerate}[label=\rmlabel]
  \item \label{BHMi} $\gamma^2 n/2\le |R| \le 2\gamma^2 n$;
  \item \label{BHMii} $\deg_{H}(S,R)\ge \alpha |R|/2$ for every $S\in\binom{V}{k-1}$.
\end{enumerate}
Indeed, $X = |R|$ is binomially distributed with expectation $\mathbb{E}[X]=\gamma^2 n$, so the first property follows from
Chernoff's inequality (see e.g.~\cite[Theorem 2.1]{Janson}).
Now let $S\in \binom{V}{k-1}$ be an arbitrary set of size $k-1$ and let $X_S = \deg_{H}(S,R)$.
Then $X_S$ is also binomially distributed with expectation $\mathbb{E}[X_S]=\alpha |R|$.
The second property holds by using the union bound combined with Chernoff's inequality.
Consequently, a reservoir set $R$ with all required properties indeed exists.
\end{proof}

Next we restate and show our path-cover lemma.
\begin{LemmA*}
Let $k\ge 3$ and $r\ge 2$, and suppose $1/n\ll \varepsilon\ll1/m\ll \gamma,1/k,1/r$.
Let $G^{(k)}(n,p)$ be the binomial random $k$-graph on an $n$-vertex set $V$.
If $p=p(n)\geq n^{-\sigma-\varepsilon}$, then a.a.s.~$G^{(k)}(n,p)$ has the following property.

For every subset $Y\subseteq V$ there exists a collection of at most $\gamma n/2$ vertex-disjoint $(k,r)$-paths, each with exactly $m$ vertices, covering all but at most $\gamma n$ vertices of $V\setminus Y$.
\end{LemmA*}

Our proof of Lemma~\ref{lem:cov} will use Proposition~\ref{p2} and the following result of~\cite{Beden}, which ensure that $G^{(k)}(n,p)$ can be almost
covered by $(k,r)$-paths.
Recall that for $m\ge k+r-1$, we denote by $P_m^{k,r}$ the $(k,r)$-path with $m$ vertices and $\left(m-\frac{(k-1)(k+r-1)}{k}\right)\binom{k+r-2}{k-1}$ edges.

\begin{lemma}[\cite{Beden}, Lemma 2.2 (i)]\label{l5}
Let $F$ be a labelled $k$-graph with $m$ vertices and $f$ edges.
Let $\gamma>0$ and suppose $1/n\ll1/C\ll \gamma,1/f,1/b$.
Let $G^{(k)}(n,p)$ be the binomial random $k$-graph on an $n$-vertex set $V$.
If $p=p(n)$ is such that $\Phi_{F}\geq Cn$, then a.a.s.~every induced subgraph of $G^{(k)}(n,p)$ of order $\gamma n$ contains a copy of $F$.
\end{lemma}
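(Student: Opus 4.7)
The plan is to apply Janson's inequality separately to each $\gamma n$-element subset of $V$ and then take a union bound over the at most $\binom{n}{\gamma n}\le 2^n$ such subsets. Fix $U\subseteq V$ with $|U|=\gamma n$, let $\mathcal{J}$ denote the family of labelled copies of $F$ in the complete $k$-graph on $U$, and for each $J\in\mathcal{J}$ let $A_J$ be the event that $J\subseteq G$. Setting $X=\sum_{J\in\mathcal{J}}\mathbf{1}_{A_J}$, the goal is to prove $\Pr[X=0]\le \exp(-2n)$; the union bound then yields a failure probability of at most $\exp(-n)$.

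For the expectation one has
\[
\mu:=\mathbb{E}[X]\ge C_1^{-1}|U|^b p^f = C_1^{-1}\gamma^b\,n^b p^f \ge C_1^{-1}\gamma^b\,\Phi_F \ge C_1^{-1}\gamma^b\,cn,
\]
using the definition of $\Phi_F$ with the trivial choice $H=F$ and absorbing the labelling and automorphism factors into a constant $C_1=C_1(b,f)$. For the dependency sum $\Delta=\sum_{J\ne J',\,E(J)\cap E(J')\ne\emptyset}\Pr[A_J\cap A_{J'}]$, I would partition pairs $(J,J')$ by the isomorphism type of their intersection $H\subseteq F$, which necessarily satisfies $e_H\ge 1$, and bound the contribution of each type by a constant multiple of $n^{2b-v_H}p^{2f-e_H}$. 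Dividing by $\mu^2$ and invoking $\Phi_F\ge cn$ once more,
\[
\frac{\Delta}{\mu^2}\le C_2\!\!\sum_{H\subseteq F,\,e_H\ge 1}\!\!\frac{1}{n^{v_H}p^{e_H}} \le \frac{C_3}{cn},
\]
with $C_2,C_3$ depending only on $b$ and $f$.

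Janson's inequality in the form $\Pr[X=0]\le\exp(-\frac{1}{2}\min(\mu,\mu^2/\Delta))$ then gives $\Pr[X=0]\le\exp(-(c/C_4)n)$ for some constant $C_4=C_4(\gamma,b,f)$. The hierarchy $1/c\ll\gamma,1/f,1/b$ is arranged precisely so that $c$ can be taken large enough to force $c/C_4\ge 2$, whence $\Pr[X=0]\le\exp(-2n)$ and the union bound closes the argument. The only real obstacle is the bookkeeping of constants: one must verify that the combinatorial overhead from the sum over the $O(2^b)$ intersection types $H\subseteq F$, together with the $\gamma^b$ loss in the first moment, is absorbed by the freedom to choose $c$ large, which is exactly the role played by the parameter hierarchy. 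No idea beyond Janson's inequality and the observation that $\Phi_F\ge cn$ controls every nontrivial subgraph of $F$ simultaneously is required.
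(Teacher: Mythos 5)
Your proof is correct and follows the standard approach — Janson's inequality applied to the count of labelled copies of $F$ in $G[U]$ for a fixed $\gamma n$-set $U$, with $\Phi_F\ge cn$ controlling both $\mu$ and $\Delta/\mu^2$ simultaneously, and a union bound over the at most $2^n$ choices of $U$ — which is exactly how Lemma 2.2(i) is established in \cite{Beden} (the present paper only cites it). The one small slip is that your intermediate constants $C_2,C_3$ pick up a $\gamma^{-b}$ factor from replacing $(\gamma n)^{v_H}$ by $n^{v_H}$ and so also depend on $\gamma$, but you correctly acknowledge this in $C_4$ and it is absorbed by the hierarchy $1/c\ll\gamma,1/f,1/b$, so the argument closes as claimed.
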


Now we are ready to prove Lemma~\ref{lem:cov}.

\begin{proof}[Proof of Lemma~\ref{lem:cov}]
Let $k$, $r$ and $\gamma$ be given and fix a subset $Y\subseteq V$.
We define additional constants such that $1/n\ll \varepsilon\ll 1/C \ll 1/m\ll \gamma,1/k,1/r$.
Applying Proposition~\ref{p2} and Lemma~\ref{l5} to $G^{(k)}(n,p)$ with $F=P_m^{k,r}$ and $f=\left(m-\frac{(k-1)(k+r-1)}{k}\right)\binom{k+r-2}{k-1}$, we conclude that a.a.s~every induced subgraph of $G^{(k)}(n,p)$ of order $\gamma n$ contains a copy of $P_m^{k,r}$.
Note that $n/m\le \gamma^2 n$ by the choice of $m$.
Thus we can greedily find at most $\gamma^2 n$ pairwise vertex-disjoint $(k,r)$-paths $P_m^{k,r}$ in $V\setminus Y$ and these paths cover all but at most $\gamma n$ vertices of $V\setminus Y$.
\end{proof}
\section{Proof of the Connecting Lemma and the Absorbing Lemma}
\label{s5}
In this section we focus on the connecting lemma and absorbing lemma.
The following lemma proved by Han, Morris, and Treglown in~\cite{Han} is a careful application of Janson's inequality, which can be used to give a multi-round greedy algorithm that gives vertex-disjoint embedding of $t$ constant-sized $k$-graphs $F_1,\ldots,F_t$, given specified root vertices.
To digest its statment, one may take all $F_i=F$ to be identical, $|W_i|=1$ \footnote{If one takes all $W_i=\emptyset$, then the requirement on $\Phi'$ reduces to requiring $\Phi_F\ge Cn$.} and $\mathcal F_i=\binom Vb$ for all $i$, and $V'=V$.
Then the lemma says under numerical conditions, among any $s$ root vertices, one can accomplish at least one embedding with given root vertices.

\begin{lemma}[\cite{Han}, Lemma 2.8]\label{l3}
Let $n, t=t(n), s=s(n)\in \mathbb{N}$, $0<\beta<1/2$, and $L,b,w,f,k\in \mathbb{N}$ be such that $k\geq 2$, $Lt, sw\leq \beta n/(4b)$ and $\binom{t}{s}\leq 2^n$.
Let $F_1,\ldots,F_t$ be labelled $k$-graphs with distinguished vertex subsets $W_i\subseteq V(F_i)$ such that $|W_i| \leq w$, $|V(F_i\setminus W_i)| = b$, $e(F_i) = f$ and $e(F_i[W_i]) = 0$ for all $i\in [t]$.
Now let $V$ be an $n$-vertex set and let $U_1,\ldots ,U_t\subseteq V$ be labelled vertex subsets with $|U_i| = |W_i|$ for all $i\in [t]$.
Finally, suppose there are families $\mathcal{F}_1, \ldots,\mathcal{F}_t\subseteq \binom{V}{b}$ of labelled vertex sets such that for each $i\in[t]$, $|\mathcal{F}_i|\geq \beta n^b$.

Now suppose that $1 \leq s(n)\leq t(n)$ and $p = p(n)$ are such that
\begin{center}
$s\cdot \Phi\geq \left(\frac{{2^{b+7}b!}}{\beta^2}\right)\min\left\{Lt\log n,n\right\}$ and $\Phi'\geq \left(\frac{{2^{b+7}b!}}{\beta^2}\right)n$,
\end{center}
where $\Phi= \min\left\{\Phi_{F_i,W_i}\colon\, i\in [t]\right\}$ and $\Phi'=\min\left\{\Phi_{F_i\setminus W_i}\colon\,i\in [t]\right\}$ with respect to $p = p(n)$.
Then, a.a.s., for any $V'\subseteq V$, with $|V'|\geq n-Lt$ and any subset $S\subseteq[t]$ such that $|S| = s$ and $U_i\cap U_j =\emptyset$
for $i\neq j\in[s]$, there exists some $i\in S$ such that there is an embedding (which respects labelling) of $F_i$ in $G^{(k)}(n, p)$ on $V$ which maps $W_i$ to $U_i$ and $V(F_i)\setminus W_i$ to a labelled set in $\mathcal{F}_i$ which lies in $V'$.
\end{lemma}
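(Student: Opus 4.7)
The plan is to show, for each fixed admissible pair $(V', S)$, that the number of valid embeddings of some $F_i$ ($i \in S$) is positive with probability so high that a union bound over all such $(V', S)$ still succeeds; the main tool is Janson's inequality.

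Fixing $V'$ and $S$ as in the statement, I would define
\[
X = \sum_{i \in S} \sum_{A} \mathbf{1}[B_{i,A}],
\]
where the inner sum runs over labelled $A \in \mathcal{F}_i$ with $A \subseteq V'$ and $B_{i,A}$ is the event that every edge of $F_i$ lies in $G^{(k)}(n,p)$ under the labelling $W_i \mapsto U_i$, $V(F_i) \setminus W_i \mapsto A$. Since $Lt \leq \beta n/(4b)$, at least $\beta n^b / 2$ of the labelled tuples in $\mathcal{F}_i$ lie in $V'$, so $\mu := \mathbb{E}[X] \geq s \beta n^b p^f / 2$. The heart of the argument is to estimate the dependency $\Delta = \sum \Pr[B_{i,A} \cap B_{j,A'}]$, summed over ordered pairs of distinct embeddings sharing at least one edge of $G^{(k)}(n,p)$. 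I would split $\Delta$ into a diagonal part ($i = j$) and an off-diagonal part ($i \neq j$) and parameterize each pair by the subgraph $H' \subseteq F_i$ spanned by the shared edges. The diagonal part allows $V(H') \cap W_i$ to be nonempty, and summing over $H'$ with the bound $\Phi_{F_i, W_i} \geq \Phi$ yields
\[
\Delta_{=} \leq 2^{b+1} b! \cdot s \cdot \frac{(n^b p^f)^2}{\Phi}.
\]
In the off-diagonal case the hypothesis $U_i \cap U_j = \emptyset$ forces the shared edges to avoid root vertices entirely, so $H' \subseteq F_i \setminus W_i$, and an analogous estimate using $\Phi_{F_i \setminus W_i} \geq \Phi'$ gives
\[
\Delta_{\neq} \leq 2^{b+1} b! \cdot s^2 \cdot \frac{(n^b p^f)^2}{\Phi'}.
\]

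Janson's inequality then gives $\Pr[X = 0] \leq \exp(-\mu^2/(2(\mu + \Delta)))$. Substituting the two hypotheses $s \Phi \geq (2^{b+7}b!/\beta^2)\min\{Lt\log n, n\}$ and $\Phi' \geq (2^{b+7}b!/\beta^2)n$ makes this bound small enough to absorb a union bound over the at most $n^{Lt}$ choices of $V'$ and the at most $\binom{t}{s} \leq 2^n$ choices of $S$. The split of the numerical hypothesis into a $\Phi$-bound (weaker, in terms of $\min\{Lt\log n, n\}$) and a $\Phi'$-bound (requiring the full $n$) is exactly tailored so that $\Delta_{=}$ is controlled against the $V'$-union bound while $\Delta_{\neq}$ is controlled against the $S$-union bound.

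I expect the main obstacle to be the precise bookkeeping in the dependency estimate: one must enumerate induced subgraphs of $F_i$ carefully, track the fixed labelling on $W_i$ versus the free vertices in $A$, and crucially observe that pairwise disjointness of the $U_i$ restricts off-diagonal overlaps to avoid any root vertex. Without that observation, the off-diagonal terms would demand the stronger bound $\Phi$ instead of $\Phi'$; being able to use the (potentially much smaller) $\Phi$ only for diagonal overlaps is exactly what makes the lemma applicable in the regime where $\Phi$ is sublinear in $n$, which is precisely the regime that Lemma~\ref{l2} produces and that the main theorem requires.
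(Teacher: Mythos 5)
Your strategy---Janson's inequality for a fixed admissible pair $(V',S)$ followed by a union bound over $V'$ (at most $n^{Lt}$ choices) and $S$ (at most $2^n$ choices), with $\Delta$ split into a diagonal part controlled by $\Phi$ and an off-diagonal part controlled by $\Phi'$---is exactly the approach of the cited reference, and the division of labour you describe between the two numerical hypotheses is the right one. There is, however, a genuine gap in your justification of the off-diagonal estimate. You assert that $U_i\cap U_j=\emptyset$ alone forces the shared edges of two embeddings with $i\neq j$ to avoid all root vertices, so that the overlap graph $H'$ lives in $F_i\setminus W_i$. That is false as stated: a shared $k$-set $e$ may contain a vertex $u=\phi_i(w)\in U_i$ for some $w\in W_i$; since $e\subseteq U_j\cup A'$, disjointness of $U_i$ and $U_j$ only forces $u\in A'$, not $u\notin e$. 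If the overlap can touch $W_i$, then parameterizing by $H'\subseteq F_i$ produces the weaker exponent $v_{H'}-v_{H'[W_i]}$ rather than $v_{H'}$, the bound on $\Delta_{\neq}$ degrades to involve $\Phi$ rather than $\Phi'$, and the $s^2$-weighted off-diagonal term can no longer be controlled by the hypothesis on $\Phi'$---destroying precisely the feature you correctly identify as the point of the lemma.

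The fix is to further restrict each $\mathcal{F}_i$ to labelled $b$-tuples disjoint from $\bigcup_{j\in S}U_j$, and this is where the hypothesis $sw\leq\beta n/(4b)$, which your sketch never invokes, enters. Indeed $\bigl|\bigcup_{j\in S}U_j\bigr|\leq sw$, so at most $b\cdot sw\cdot n^{b-1}\leq\beta n^b/4$ tuples of $\mathcal{F}_i$ meet this set; together with the $\leq\beta n^b/4$ tuples meeting $V\setminus V'$ (using $Lt\leq\beta n/(4b)$), at least $\beta n^b/2$ tuples survive, so $\mu$ is unchanged up to the constant you already carry. After the restriction, a shared edge $e$ lies in $(U_i\cup A)\cap(U_j\cup A')$ while $U_i$ is disjoint from both $U_j$ and $A'$, hence $e\cap U_i=\emptyset$ and symmetrically $e\cap U_j=\emptyset$, so $e\subseteq A\cap A'$ and $H'\subseteq F_i\setminus W_i$ as you claim. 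With this step supplied, the estimates for $\Delta_{=}$ and $\Delta_{\neq}$ and the union-bound bookkeeping go through as you outline.
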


Preparing for the connecting lemma and absorbing lemma, we prove the following proposition, adapted from~\cite[Proposition 6.21]{Han}, which shows how to use Lemma~\ref{l3} repeatedly to embed the desired hypergraphs.

\begin{proposition}\label{p1}
Let $k,b,f,w,\ell,n,t=t(n)\in \mathbb{N}$ be such that $k\ge 2$, and suppose $1/n\ll 1/C,\gamma\ll \beta,1/f,1/b,1/w,1/k,1/\ell$.
Let $F$ be a labelled $k$-graph with base vertex set $W\subseteq V(F)$ such that $|W| =w$, $|V (F)\setminus W|=b$, $e(F[W]) = 0$ and $e(F)= f$.
Further, suppose that $p = p(n)$ is such that $\Phi_{F\setminus W} \geq Cn$ and $\Phi_{F, W}\geq Cn^{1/\ell}$.

Let $V$ be an $n$-vertex set, and let $U_1,\ldots , U_t\subseteq V$ with $t\le \gamma n$ be pairwise disjoint and be such that $|U_i| = |W|$ for each $i\in[t]$.
Suppose that $\mathcal{F}_1,\ldots, \mathcal{F}_t$ are families of ordered $b$-sets on $V$ such that $|\mathcal{F}_i|\geq \beta n^b$.
Then a.a.s.~there exists a collection of embeddings $\phi_1, \ldots , \phi_t$ such that each $\phi_i$ embeds a copy of $F$ into $G^{(k)}(n, p)$ on $V$ with $W$ being mapped to $U_i$ and $V(F)\setminus W$ being mapped to a set in $\mathcal{F}_i$ which is vertex-disjoint with $\bigcup_{i\in[t]}U_i$.
Furthermore, for $i\neq j$ we have $\phi_i(V (F)\setminus W) \cap \phi_j(V (F) \setminus W) = \emptyset$.
\end{proposition}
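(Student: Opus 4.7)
The plan is to apply Lemma~\ref{l3} iteratively over $\ell+1$ rounds, combined with multi-round exposure of the random hypergraph. Decompose $1-p=\prod_{i=1}^{\ell+1}(1-p_i)$ with each $p_i\ge p/(\ell+1)$ (possible by Bernoulli's inequality), so that $G^{(k)}(n,p)$ has the same distribution as $\bigcup_i G_i$ for independent $G_i\sim G^{(k)}(n,p_i)$. Since $p_i^{e_H}\ge p^{e_H}/(\ell+1)^f$ for every $H\subseteq F$, one gets $\Phi_{F\setminus W}(n,p_i)\ge c'n$ and $\Phi_{F,W}(n,p_i)\ge c'n^{1/\ell}$ with $c':=c/(\ell+1)^f$. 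The hypothesis $2^{b+9}(\ell+1)^fb!\le c\beta^2$ is calibrated precisely so that these shrunken quantities still satisfy the numerical hypotheses of Lemma~\ref{l3} in each round (with $\beta$ replaced by $\beta/2$, to absorb the shrinkage of the families $\mathcal{F}_j$ after restriction).

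At the start of round $i$, let $T_i\subseteq[t]$ index the still-unembedded copies, $t_i:=|T_i|$, and let $V_i^\star\subseteq V$ be $\bigcup_j U_j$ together with the non-root images produced in earlier rounds. Since at most $(b+w)t\le(b+w)\lambda n\le \beta n/(8b)$ vertices are excluded, $V'_i:=V\setminus V_i^\star$ satisfies $|V'_i|\ge n-Lt$ with $L:=b+w$, and the restricted families $\mathcal{F}'_j:=\{X\in\mathcal{F}_j:X\subseteq V'_i\}$ still satisfy $|\mathcal{F}'_j|\ge(\beta/2)n^b$. Set
\[
s_i:=\max\!\left\{1,\Bigl\lceil\tfrac{2^{b+9}b!}{\beta^2 c'}\cdot\tfrac{Lt_i\log n}{n^{1/\ell}}\Bigr\rceil\right\}.
\]
With this choice, both numerical hypotheses of Lemma~\ref{l3} are satisfied for the independent hypergraph $G_i$ and the data $(V'_i,\mathcal{F}'_j,U_j)$, so a.a.s.\ from every $s_i$-subset $S\subseteq T_i$ with pairwise disjoint $\{U_j\}_{j\in S}$, at least one $j\in S$ admits a valid embedding avoiding $V_i^\star$.

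We iterate greedily within round $i$: pick such an $S$, obtain an embedding, remove the embedded index from $T_i$, append its non-root image to $V_i^\star$, and repeat. This yields $t_{i+1}\le s_i-1=O(t_i\log n/n^{1/\ell})$ in the regime where the main term dominates $s_i$, and $t_{i+1}=0$ once $s_i=1$. The recurrence gives $t_{\ell+1}\le t\bigl(O(\log n/n^{1/\ell})\bigr)^{\ell+1}=o(1)$ for $n$ large, so in total every copy is embedded. A union bound over the $\ell+1$ applications of Lemma~\ref{l3} (one for each $G_i$) shows that the required event holds a.a.s.

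The main obstacle is guaranteeing, throughout the within-round iteration, the existence of an $s_i$-subset $S\subseteq T_i$ with pairwise disjoint root sets $\{U_j\}_{j\in S}$, since the proposition does not assume the $U_j$'s are disjoint. This is handled by a preliminary bucket-sort of $T_i$ according to vertex multiplicity in $\{U_j\}_{j\in T_i}$: indices whose $U_j$'s cluster on a few vertices are few in number and can be embedded directly by Lemma~\ref{l3} with $s=1$ (using the room left in the bound on $p$), while for the bulk of indices each vertex of $V$ appears in only $O(1)$ many $U_j$'s and the desired disjoint subsets are produced by a routine greedy selection. The conditions $2^{b+9}(\ell+1)^fb!\le c\beta^2$ and $(b+w)\lambda\le\beta/(8b)$ are exactly those needed to carry this bookkeeping through all $\ell+1$ rounds.
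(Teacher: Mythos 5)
Your proposal follows essentially the same route as the paper: expose $G^{(k)}(n,p)$ in $\ell+1$ independent rounds $G_1,\ldots,G_{\ell+1}$ with $p'\ge p/(\ell+1)$ so that $\Phi_{F\setminus W}(n,p')\ge c'n$ and $\Phi_{F,W}(n,p')\ge c'n^{1/\ell}$ with $c'=c/(\ell+1)^f$; restrict the families at the start of each round to avoid $\bigcup_j U_j$ and the images from earlier rounds (losing at most a factor of $2$ in $\beta$); and within each round greedily iterate Lemma~\ref{l3}, reducing the number of unembedded indices by roughly a factor of $\log n/n^{1/\ell}$ per round. Your choice of $s_i$ via a ceiling is an equivalent parameterization of the paper's explicit sequences $t_j,s_j$, and your verification of the numerical hypotheses of Lemma~\ref{l3} is correct (note only that $t_{\ell+1}$ should be $t\cdot(O(\log n/n^{1/\ell}))^{\ell}$, i.e.\ $O((\log n)^{\ell})$, not $o(1)$; the final round with $s_{\ell+1}=1$ then finishes, as you also observe).

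The one place where you depart from the paper is the closing paragraph. You are right that Lemma~\ref{l3} only asserts a successful embedding from an $s$-subset $S$ whose root images $\{U_j\}_{j\in S}$ are pairwise disjoint, so the iteration needs such subsets to exist at every step; and you are right that the proposition, as stated, does not literally assume the $U_i$ are pairwise disjoint. But the bucket-sort fix you propose is flawed: the claim that \emph{``indices whose $U_j$'s cluster on a few vertices are few in number''} is false. Nothing prevents, say, all $t$ sets $U_j$ from being the same $w$-set, in which case every pair overlaps, no $s$-subset with $s\ge 2$ and disjoint roots exists, and the residual multiplicity bound $K$ in your ``routine greedy selection'' can be as large as $t$, so the greedy step does not produce the subsets you need. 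The paper's proof simply uses pairwise disjointness of the $U_i$ tacitly (this is the intended hypothesis, and it is satisfied in both applications: in the connecting lemma the $U_i$ are the end-pairs of vertex-disjoint paths, and in the absorbing lemma they are distinct singletons $\{u_i\}$). With that hypothesis made explicit, the bucket-sort paragraph is unnecessary and the rest of your argument is sound.
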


\begin{proof}
The idea here is to greedily embed $F$ one by one into $G^{(k)}(n,p)$ by repeatedly applying Lemma~\ref{l3} with $F_i=F$ for all $i\in [t]$.
It suffices to assume that $t=\gamma n$.
For the sake of brevity, we say that an embedding $\phi_i$ of $F$ is \emph{valid} if it maps $W$ to $U_i$ and maps $V (F\setminus W)$ to a set in $\mathcal{F}_i$ which is disjoint from $U = \bigcup_{i\in[t]}U_i$ and also disjoint from $\phi_j(V (F\setminus W))$ for all indices $j\in[t]$ which we have already embedded.

Now we expose $G=G^{(k)}(n,p)$ in $\ell+1$ rounds: $G=\bigcup_{j=1}^{\ell+1}G_j$ with each $G_j$ an independent copy of $G^{(k)}(n,p')$, where $(1-p')^{\ell+1}=1-p$.
Note that $p'>p/(\ell+1)$.
By the definitions of $\Phi_{F,W}$ and $\Phi_{F\setminus W}$, we have $\Phi:=\Phi_{F,W}(n,p')\geq C'n^{1/\ell}$ and $\Phi':=\Phi_{F\setminus W}(n,p')\geq C' n$, where $C'=(\ell+1)^{-f}C$.
We will complete our embedding by $\ell+1$ phases.

For $j\in[\ell]$, let $t_j=\gamma^{-(j-2)} n^{1-(j-1)/\ell}(\log n)^{j-1}$ and $s_j=\gamma^{-(j-1)} n^{1-j/\ell}(\log n)^{j}$.
Moreover, let $t_{\ell+1}=\gamma^{-(\ell-1)}(\log n)^{\ell}$ and $s_{\ell+1}=1$.

Let $T=\{1,\ldots, t\}$.
In the $j$th phase we start with $t_j$ indices $T_j\subseteq T$ and let $R_j = T \setminus T_j$.
Note that $R_j$ is the set of indices of $T$ that we have already embedded before starting the $j$th phase.
So we also have some set of already chosen valid embeddings $\{\phi_i\colon\, i\in R_j\}$.
Let $V_j'' = \bigcup_{i\in R_j}\phi_i(V (F_i))\cup U$ and $\mathcal{F}^{(j)}_i = \{S\in \mathcal{F}_i\colon\, S\cap V_j'' = \emptyset\}$.
Then we have that $|V_j''|\leq (b+w)t$ and $|\mathcal{F}^{(j)}_i|\geq \beta n^b-(b+w)tn^{b-1} \geq\left(\beta-(b+w)\gamma\right)n^b\geq \beta n^b/2$ since $\gamma\ll \beta,1/b,1/w$.
Now applying Lemma~\ref{l3} to the sets $\mathcal{F}^{(j)}_i$ such that $i\in T_j$, and where $L=b+w$ and $(t_j, s_j, \beta/2,f,w,b,p')$ plays the role of $(t, s, \beta,f,w,b,p)$.
Notice that since $C'=(\ell+1)^{-f}C$ and $1/C,\gamma\ll \beta,1/b,1/f,1/w,1/\ell$, we have $C'/\gamma\geq 2^{b+9}(b+w)b!/(\beta^2)$.
Hence,
\begin{align}
s_j\cdot\Phi&\geq \gamma^{-(j-1)}n^{1-\frac{j}{\ell}}(\log n)^jC'n^{\frac{1}{\ell}}= \frac{C'}{\gamma^{j-1}}n^{1-\frac{j-1}{\ell}}(\log n)^j \nonumber\\
 &\geq \frac{2^{b+9}(b+w)b!}{\beta^2}\frac{1}{\gamma^{j-2}}n^{1-\frac{j-1}{\ell}}(\log n)^j =\frac{2^{b+7}b!}{(\beta/2)^2}(b+w)\frac{1}{\gamma^{j-2}}n^{1-\frac{j-1}{\ell}}(\log n)^{j} \nonumber\\
 &\geq \frac{2^{b+7}b!}{(\beta/2)^2}\min\left\{(b+w)t_j\log n,n\right\} \nonumber
\end{align}
for $j\in[\ell]$, and
\begin{align}
s_{\ell+1}\cdot\Phi&\geq C'n^{1/\ell} \geq \frac{2^{b+7}b!}{(\beta/2)^2}(b+w)\frac{1}{\gamma^{\ell-1}}(\log n)^{\ell+1}=\frac{2^{b+7}b!}{(\beta/2)^2}\min\left\{(b+w)t_{\ell+1}\log n,n\right\}. \nonumber
\end{align}
Moreover, we have $\Phi'\geq C'n=(\ell+1)^{-f}Cn \geq \left(\frac{2^{b+7}b!}{(\beta/2)^2}\right)n$ because $1/C\ll \beta,1/b,1/f,1/\ell$.
We conclude that a.a.s.~given any set $V_j'$ of at most $Lt_j$ vertices and any set $S_j$ of $s_j$ indices in $T_j$ such that the sets $U_i$ with $i\in S_j$ are pairwise disjoint, there is a valid embedding of $F$ in $G_j$ which avoids $V_j'$.
Now we can initiate with $V_j'=\emptyset$ and repeatedly find indices $i \in T_j$ for which we have a valid embedding $\phi_i$.
We add this embedding to our chosen embeddings, add the vertices of it to $V_j'$ and delete the index $i$ from $T_j$.
The conclusion of Lemma~\ref{l3} asserts that we continue this process until we have $t_{j+1}$, namely $s_j$, indices left in $T_j$ and we can move to the next phase defining $T_{j+1} = T_j$ (or finish if $j = \ell+1$).
In conclusion, we can get the desired $t$ embeddings of $F$.
\end{proof}
\subsection{The connecting lemma}
In this subsection we use Proposition~\ref{p1} and Lemma~\ref{l2} to prove our connecting lemma, which we restate below for convenience.
Given a $k$-graph $H$ and two vertex subsets $S$ and $R$ with $|S|=s<k$, recall that $\deg_H(S,R)$ is the number of $(k-s)$-sets $T\subseteq R$ such that $S\cup T$ is an edge of $H$.

\begin{Lemma*}
Let $k\ge 3$, $r\ge 2$, and $b\geq (k+r)^2\binom{k+r-2}{k-1}$ be an even integer.
Suppose $1/n\ll \varepsilon,\gamma \ll \alpha,1/k,1/r,1/b$.
Let $H=(V,E)$ be an $n$-vertex $k$-graph and $R\subseteq V$, and suppose $p=p(n)\geq n^{-\sigma-\varepsilon}$.
For every collection of $2t$ with $t\le \gamma n$ mutually disjoint ordered $h$-sets $A_i$ and $B_i$ in $V$, each spanning a copy of $K_h^{(k)}$ in $H$, the following holds for $V'=\bigcup_{i\in [t]}(A_i\cup B_i)$.

If $\deg_{H}(S,R)\geq \alpha n$ for all $S\in \binom{V}{k-1}$, then a.a.s.~$H\cup G^{(k)}(n,p)$ contains a family of $(k,r)$-paths $\mathcal{T}=\{\mathcal{T}_1,\ldots, \mathcal{T}_t\}$, each with exactly $b$ internal vertices in $R$, connecting $(A_i, B_i)_{i\in [t]}$, which contains vertices from $V'\cup R$ only.
\end{Lemma*}

\begin{proof}
Given the setup of the statement, we choose $C$ such that $1/n\ll \varepsilon,\gamma, 1/C\ll \alpha,1/k,1/r,1/b$.
For $i\in[t]$, we write $A_i=(w_1^i,\ldots,w_{h}^i)$ and $B_i=(u_{h}^i,\ldots,u_1^i)$.
Our task is to connect $A_i$ and $B_i$ by using exactly $b$ vertices in $R$ for all $i\in [t]$ such that they are pairwise vertex-disjoint and each of them spans a $(k,r)$-path.

We first find candidates for ordered $b$-sets to connect $A_i$ and $B_i$ for each $i\in [t]$.
Since $\deg_{H}(S,R)\geq \alpha n$, we can extend $A_i$ to a tight path with vertices $(w_1^i,\ldots,w_{h}^i,v_1^i,\ldots,v_{b/2}^i)$ using only vertices from $A_i\cup R$ and there are at least $(\alpha n/2)^{b/2}$ choices for the ordered set $(v_1^i,\ldots,v_{b/2}^i)$.
Similarly, we can extend $B_i$ to a tight path $(v_{b/2+1}^i,\ldots, v_b^i,u_{h}^{i},\ldots,u_1^{i})$ using only vertices from $B_i\cup R$ and there are at least $(\alpha n/2)^{b/2}$ choices for the ordered set $(v_{b/2+1}^i,\ldots, v_b^i)$.
So there are at least $(\alpha n/2)^b$ choices for the ordered $b$-sets $(v_1^i,\ldots, v_b^i)$.
For convenience, we use $\mathcal{F}_i$ to denote the collection of such ordered $b$-sets, then we have $|\mathcal{F}_i|\geq(\alpha n/2)^b$ for every $i\in [t]$.

Next we use the edges of $G=G^{(k)}(n,p)$ to obtain the desired copy of $P_{b+2h}^{k,r}$ with base vertex set $A_i\cup B_i$ for each $i\in [t]$.
The idea is to connect $A_i$ and $B_i$ for each $i\in[t]$ by using Proposition~\ref{p1}.
Suppose $B$ and $W$ are the labelled $k$-graph and vertex set $W\subseteq V(B)$ defined in Definition~\ref{def2} with $b$ as above.
Note that if an ordered set $T\in\mathcal{F}_i$ spans a labelled copy of $B$ with ordered sets $A_i$ and $B_i$ fixed, then $T$ connects $A_i$ and $B_i$.
Moreover, we have $\Phi_{B, W}\geq Cn^{\sigma/2}$ and $\Phi_{B\setminus W}\geq Cn$ by Lemma~\ref{l2}.
Now we apply Proposition~\ref{p1} to $F=B$, $W$ and the sets $\mathcal{F}_i$ for every $i\in[t]$ with $t$, $b$, $C$, $\gamma$, $w=2h$, and $\beta=(\alpha/2)^b$.
Taking $U_i=\{w_1^i,\ldots,w_{h}^i,u_{h}^{i},\ldots,u_1^{i}\}$ for all $i\in [t]$, we conclude that a.a.s.~there is a set of embeddings $\phi_1, \ldots , \phi_t$ such that each $\phi_i$ embeds a copy of $B$ into $G^{(k)}(n, p)$ on $V$ with $W$ being mapped to $U_i$ and $V(B)\setminus W$ being mapped to a set in $\mathcal{F}_i$ which is vertex-disjoint with $\bigcup_{i\in[t]}U_i$.
Furthermore, for $i\neq j$ we have $\phi_i(V (B)\setminus W) \cap \phi_j(V (B) \setminus W) = \emptyset$.
Taking $\mathcal{T}_i=\phi_i(V (B)\setminus W)$ for each $i\in [t]$, then $\mathcal{T}=\{\mathcal{T}_1,\ldots,\mathcal{T}_t\}$ connects $(A_i, B_i)_{i\in [t]}$ which contains vertices from $V'\cup R$ only.
\end{proof}
\subsection{The absorbing lemma}
Now we prove our absorbing lemma.
The next tool we will use is supersaturation obtained by Erd\H{o}s and Simonovits~\cite{Erdos}.
We use the equivalent form of supersaturation, which is to find copies of $k$-partite $k$-rgraphs.
Here we use it to find copies of a $k$-uniform tight path.

\begin{lemma}[\cite{Erdos}, Corollary 2]\label{l4}
Let $P^{k}_s$ be a labelled $k$-uniform tight path with $s$ vertices, and suppose $\beta\ll \theta, 1/k,1/s$.
Then every $n$-vertex $k$-graph $H$ with at least $\theta n^{k}$ edges contains at least $\beta n^s$ copies of $P^{k}_s$.
\end{lemma}

We now give the proof of our absorbing lemma, which we restate below, by applying Proposition~\ref{p1} combined with Lemmas~\ref{lem:con},~\ref{l1} and~\ref{l4}.

\begin{lemma*}
Let $k\ge 3$ and $r\ge 2$, and suppose $1/n\ll \varepsilon,\gamma\ll\alpha,1/k,1/r$.
Let $H=(V,E)$ be an $n$-vertex $k$-graph and $X\subseteq V$ with $|X|\le \gamma n$, and suppose $p=p(n)\geq n^{-\sigma-\varepsilon}$.
Then a.a.s.~$H'=H\cup G^{(k)}(n,p)$ has the following property.

If $\deg_{H}(S,V\setminus X)\geq \alpha n$ for all $S\in \binom{V}{k-1}$, then the $k$-graph $H'[V\setminus X]$ contains a $(k,r)$-path $P_{abs}$ with $|V(P_{abs})|\le\sqrt{\gamma} n$ such that for every $U\subseteq X$ there exists a $(k,r)$-path $Q$ in $H'$ with $V(Q)=V(P_{abs})\cup U$ having the same ends as $P_{abs}$.
\end{lemma*}

Before stating our proof for Lemma~\ref{lem:abs}, we first give the definitions of absorbers and absorbing paths.
We call the $(k,r)$-path $P_{abs}$ in Lemma~\ref{lem:abs} an \emph{absorbing path} for $U$.
The absorbers we use later are simply $(k,r)$-paths on $2h$ vertices, and we can insert $v$ into the middle of such a $(k,r)$-path $P$ to create a $(k,r)$-path on $2h+1$ vertices.
In other words, $P$ can absorb $v$.

\begin{definition}
\emph{Let $v$ be a vertex of a $k$-graph $H$.
An ordered $2h$-subset of vertices $(v_1,\ldots,v_{2h})$ is a \emph{$v$-absorber} if $(v_1,\ldots,v_{2h})$ spans a labelled copy of $P_{2h}^{k,r}$ and $(v_1,\ldots, v_{h},v,v_{h+1},\ldots,v_{2h})$ spans a labelled copy of $P_{2h+1}^{k,r}$ in $H$.}
\end{definition}

\begin{proof}[Proof of Lemma~\ref{lem:abs}]
Let $k\ge 3$, $r\ge 2$ and $\alpha>0$ be given.
We define additional constants such that $1/n\ll 1/C,\varepsilon,\gamma\ll\beta\ll\alpha,1/k,1/r$, where $\beta=\beta(k,r,\alpha) $ is given by Lemma~\ref{l4}.
We split the proof into two parts.
We first find a set $\mathcal{F}$ consisting of disjoint absorbers in $V\setminus X$ and then connect them to a $(k,r)$-path by using Lemma~\ref{lem:con}.
We will expose $G=G^{(k)}(n,p)$ in two rounds: $G=G_1\cup G_2$ with $G_1$ and $G_2$ as independent copies of $G^{(k)}(n,p')$, where $(1-p')^2=1-p$.
Note that $p'>p/2\geq n^{-\sigma-\varepsilon}/2\geq n^{-\sigma-2\varepsilon}$.

Fix a vertex $v\in X$.
The first step is to find the candidates for $v$-absorbers $(v_1,\ldots,v_{2h})$ in $V\setminus X$ such that each of them induces a $k$-graph with edge set $\{vv_j\ldots v_{j+(k-2)}\colon\,j\in[k+2r-2]\}$.
Recall that the link $L_v$ of $v$ is the $(k-1)$-graph with vertex set $V(L_v)=V(H)\setminus\{v\}$ and edge set $E(L_v)=\left\{S\colon\,S\cup \{v\}\in E(H)\right\}$.
Our aim is to find many ordered $2h$-subsets $S=(v_1,\ldots,v_{2h})$ in $L_v[V\setminus X]$ such that $S$ induces a $(k-1)$-graph with edge set $\{v_j\ldots v_{j+(k-2)}\colon\,j\in[k+2r-2]\}$, i.e.~a tight path on $(v_1,\ldots,v_{2h})$ in $L_v[V\setminus X]$.
Since $\deg_H(S,V\setminus X)\geq \alpha n$ for every $S\in\binom{V}{k-1}$, we have $\delta_{k-2}(L_v[V\setminus X])\geq \alpha n$.
Then we get that
\[
e(L_v[V\setminus X])\geq \frac{\alpha n}{k-1}\binom{|V\setminus X|}{k-2}\geq\frac{\alpha n}{k-1}\binom{\alpha n}{k-2}\geq \frac{\alpha^{k-1}}{2(k-1)!}n^{k-1}.
\]

Applying Lemma~\ref{l4} with $H=L_v$, $s=2h$, $\theta=\frac{\alpha^{k-1}}{2(k-1)!}$, and $k-1$ in place of $k$, we conclude that $L_v[V\setminus X]$ contains at least $\beta n^{2h}$ such ordered $2h$-subsets $S$.
For convenience, let $X=\{x_1,\ldots,x_t\}$ and denote by $\mathcal{F}_i$ the collection of such ordered $2h$-sets for every $x_i\in X$.
Then we have $t\leq \gamma n$ and $|\mathcal{F}_i|\geq \beta n^{2h}$ for every $i\in [t]$.

The second step is to find an $x_i$-absorber for every $x_i\in X$ by adding the edges of $G_1$ to fill the missing edges while keeping them to be pairwise vertex-disjoint by Proposition~\ref{p1}.
Let $A$ and $W$ be the labelled $k$-graph and vertex set defined in Definition~\ref{def1}.
Note that the labelled $k$-graph $A$ is a copy of the $k$-graph induced by the desired missing edges.
We have $\Phi:=\Phi_{A, W}(n,p')\geq Cn$ and $\Phi':=\Phi_{A\setminus W}(n,p')\geq Cn$ by Lemma~\ref{l1} with $2\varepsilon$ in place of $\varepsilon$.
Applying Proposition~\ref{p1} to $F=A$, $W$ and the sets $\mathcal{F}_i$ for each $i\in[t]$, where $b=2h$, $w=1$ and $U_i=\{u_i\}$, we conclude that a.a.s.~there is a set of embeddings $\phi_1, \ldots , \phi_t$ such that each $\phi_i$ embeds a copy of $A$ into $G_1$ on $V$ with $W$ being mapped to $U_i$ and $V(A)\setminus W$ being mapped to a set in $\mathcal{F}_i$ which is vertex-disjoint with $\bigcup_{i\in[t]}U_i$.
Furthermore, for $i\neq j$ we have that $\phi_i(V (A)\setminus W) \cap \phi_j(V (A) \setminus W) = \emptyset$.
We use $\mathcal{F}=\left\{(x_1^i,\ldots,x_{2h}^i)\colon\,i\in[t]\right\}$ to denote the set of such absorbers for vertices of $X$.

Next we will connect these absorbers by using the edges of $H\cup G_2$.
For $i\in [t]$, we write $A_i=(x_1^i,\ldots,x_h^i)$ and $B_i=(x_{h+1}^i,\ldots,x_{2h}^i)$.
Let $V'=\bigcup_{i\in[t]}(A_i\cup B_i)$ and $R=V\setminus (X\cup V')$.
Then, for every $S\in \binom{V}{k-1}$, we have
$$\deg_H(S,R)\geq \deg_H(S,V\setminus X)-|V'|\ge \alpha n-2h\gamma n\geq \alpha n/2$$
by $\gamma\ll\alpha$.
Note that for each $i\in[t]$, the ordered $(2h)$-set $(x_1^i,\ldots,x_{2h}^i)$ spans a $(k,r)$-path with the ends $A_i$ and $B_i$.
It remains to connect these short $(k,r)$-paths to a single $(k,r)$-path $P_{abs}$.
This can be done by applying Lemma~\ref{lem:con} with $(V',R,\alpha/2,\gamma,2\varepsilon)$ playing the role of $(V',R,\alpha,\gamma,\varepsilon)$.
Consequently, $H\cup G_2$ contains a family of $(k,r)$-paths, each with exactly $b$ internal vertices in $R$, $\mathcal{T}=\{\mathcal{T}_1,\ldots, \mathcal{T}_t\}$ connecting $(B_i, A_{i+1})_{i\in [t-1]}$, which yields a $(k,r)$-path $P_{abs}$ and contains vertices from $V'\cup R$ only.
Furthermore, it is easy to see that $|V(P_{abs})|\leq 2h\gamma n+b\gamma n\leq \sqrt{\gamma} n$.
Since there is an $x_i$-absorber for each $x_i$ in $X$, clearly the
$(k,r)$-path $P_{abs}$ has the desired property.
\end{proof}
\section{Acknowledgements}

We thank the anonymous referees for their careful reading and helpful comments.

\end{document}